\DeclareMathOperator{\lcm}{lcm}
\DeclareMathOperator{\supp}{supp}
\DeclareMathOperator{\ordem}{o}
\newtheorem*{myprop1}{Proposition \ref{prop_subseq}}
\newtheorem*{myprop2}{Proposition \ref{prop_hom}}
\newtheorem{defin}{Definition}[section]
\newtheorem{prop}[defin]{Proposition}
\newtheorem{lem}[defin]{Lemma}
\newtheorem{teo}[defin]{Theorem}
\newtheorem{cor}[defin]{Corollary}
\newtheorem{claim}[defin]{Claim}
\title[Countably compact group topologies with convergent sequences]{Countably compact group topologies on non-torsion Abelian groups of size continuum with non-trivial convergent sequences}
\author[M. K. Bellini, A. C. Boero, I. Castro-Pereira, V. O. Rodrigues, A. H. Tomita]{Matheus Koveroff Bellini, Ana Carolina Boero, Irene Castro Pereira, Vinicius de Oliveira Rodrigues, Artur Hideyuki Tomita}
\thanks{MSC: primary 54H11, 22A05; secondary 54A35, 54G20}
\thanks{Keywords: countable compactness, convergent sequences, topological group}
\thanks{Acknowledgments: The first and fourth authors are doctoral students Proc. FAPESP numbers 2017/15709-6 and  FAPESP 2017/15502-2 who conducted a major revision on the manuscript to its present form. The second author has received financial support from FAPESP (Brazil) --- ``Bolsa de P\'os-Doutorado (processo 2010/19272-2). Projeto: Topologias enumeravelmente compactas em grupos abelianos". The last author received financial support from CNPq (Brazil) --- ``Bolsa de Produtividade em Pesquisa (processo 305612/2010-7). Projeto: Grupos topol\'ogicos, sele\c{c}\~oes e topologias de hiperespa\c{c}o" during the research and has received support from FAPESP Aux\' \i lio regular de pesquisa Proc. Num. 2012/01490-9, CNPq Produtividade em Pesquisa 307130/2013-4 and CNPq Projeto Universal Num. 483734/2013-6 during the research that led to this manuscript. The final revision was made during the support of Aux\' \i lio Regular FAPESP - 2016/26216-8.}
\begin{document}

\maketitle

\begin{abstract}
Under $\mathfrak{p} = \mathfrak{c}$, we  answer Question 24 of \cite{dikranjan&shakhmatov3} for cardinality ${\mathfrak c}$ , by showing that if a non-torsion Abelian group of size continuum admits a countably compact Hausdorff group topology, then it admits a countably compact Hausdorff group topology with non-trivial convergent sequences.
\end{abstract}

\section{Introduction}

\subsection{Some history}

Under Martin's Axiom, Dikranjan and Tkachenko \cite{dikranjan&tkachenko} showed that if $G$ is a non-torsion Abelian group of size continuum, then the following conditions are equivalent:

\begin{itemize}

  \item $G$ admits a countably compact Hausdorff group topology;

  \item $G$ admits a countably compact Hausdorff group topology without non-trivial convergent sequences;

  \item the free rank of $G$ is equal to $\mathfrak{c}$ and, for all $d, n \in \mathbb{N}$ with $d \mid n$, the group $dG[n]$ is either finite or has cardinality $\mathfrak{c}$.

\end{itemize}

Boero and Tomita \cite{boero&tomita3} obtained the same characterization assuming the existence of $\mathfrak{c}$ many incomparable selective ultrafilters (according to the Rudin-Keisler ordering).

In \cite{dikranjan&shakhmatov3}, Dikranjan and Shakhmatov asked (see Question 24) whether an infinite group admitting a countably compact Hausdorff group topology can be endowed with a countably compact Hausdorff group topology that contains a nontrivial convergent sequence. They also asked a similar question for pseudocompact groups and this was solved by
Galindo, Garcia Ferreira and Tomita \cite{galindo&garcia-ferreira&tomita}.
 
 In \cite{galindo&garcia-ferreira&tomita}, it was also noted that if a torsion Abelian group admits a countably compact group topology then it admits a countably compact group topology with a non-trivial convergent sequence.

Boero, Garcia Ferreira and Tomita \cite{boero&garcia-ferreira&tomita} showed that the existence of ${\mathfrak c}$ selective ultrafilters implies that the free Abelian group of cardinality $\mathfrak{c}$ admits a  group topology that makes it countably compact with a non-trivial convergent sequence.

In this article, we give under $\mathfrak{p} = \mathfrak{c}$ a partial answer to Question 24 of \cite{dikranjan&shakhmatov3}, by showing that if a non-torsion Abelian group of size continuum admits a countably compact Hausdorff group topology, then it admits a countably compact Hausdorff group topology with non-trivial convergent sequences.

The construction uses the techniques to construct countably compact group topologies without non-trivial convergent sequences, but to make a sequence convergent is harder and some new ideas are developed to cover some cases that do not appear in free Abelian groups. The sequence used in \cite{boero&garcia-ferreira&tomita} did not work in this construction. The method presented does not guarantee a great number of convergent sequences. One cannot expect to produce convergent subsequences to all sequences, as Tomita \cite{tomita2} showed that there are no sequentially compact group topologies on an infinite free Abelian group.

\subsection{Basic results, notation and terminology}

In what follows, all group topologies are assumed to be Hausdorff. We recall that a topological space $X$ is \emph{countably compact} if every infinite subset of $X$ has an accumulation point.

The following definition was introduced in \cite{bernstein} and is closely related to countable compactness.

\begin{defin}\label{def_p-limit}
Let $p$ be a free ultrafilter on $\omega$ and let $s:\omega\rightarrow X$ be a sequence in a topological space $X$. We say that $x \in X$ is a \emph{$p$-limit point} of $s$ if, for every neighborhood $U$ of $s$, $\{n \in \omega : s(n) \in U\} \in p$. In this case, if $X$ is a Hausdorff space, we write $x = p-\lim s$.
\end{defin}

The set of all free ultrafilters on $\omega$ will be denoted by $\omega^{*}$. It is not difficult to show that a $T_1$ topological space $X$ is countably compact if, and only if, each sequence in $X$ has a $p$-limit point, for some $p \in \omega^{*}$.

\begin{prop}\label{prop_p-limit_product}
If $p \in \omega^{*}$ and $(X_i : i \in I)$ is a family of topological spaces, then $(y_{i})_{i \in I} \in \prod_{i \in I} X_i$ is a $p$-limit point of a sequence $((x_{i}^{n})_{i \in I} : n \in \omega)$ in $\prod_{i \in I} X_i$ if, and only if, $y_i = p-\lim (x_{i}^{n} : n \in \omega)$ for every $i \in I$.
\end{prop}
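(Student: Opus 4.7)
The plan is to prove both implications directly from the definition of $p$-limit, using the fact that the product topology has a base of sets that restrict only finitely many coordinates, together with the filter property that finite intersections of members of $p$ remain in $p$.

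For the forward direction, I would assume $(y_i)_{i\in I}$ is a $p$-limit of the sequence $((x_i^n)_{i\in I} : n \in \omega)$ and fix any $i_0 \in I$ together with an arbitrary open neighborhood $V$ of $y_{i_0}$ in $X_{i_0}$. Since the projection $\pi_{i_0}$ is continuous, the preimage $\pi_{i_0}^{-1}[V]$ is an open neighborhood of $(y_i)_{i\in I}$ in the product, so by hypothesis the set
\[
\{n \in \omega : (x_i^n)_{i\in I} \in \pi_{i_0}^{-1}[V]\} = \{n \in \omega : x_{i_0}^n \in V\}
\]
belongs to $p$. As $V$ was arbitrary, this gives $y_{i_0} = p\text{-}\lim (x_{i_0}^n : n \in \omega)$, and since $i_0$ was arbitrary the conclusion follows.

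For the converse, assume $y_i = p\text{-}\lim (x_i^n : n \in \omega)$ for every $i \in I$, and let $U$ be an open neighborhood of $(y_i)_{i\in I}$ in $\prod_{i\in I} X_i$. Without loss of generality I may assume $U$ is a basic open set, i.e.\ $U = \prod_{i\in I} U_i$ where each $U_i$ is open in $X_i$ and $U_i = X_i$ for all $i$ outside some finite set $F \subseteq I$. For each $j \in F$, the hypothesis applied to the neighborhood $U_j$ of $y_j$ yields $A_j := \{n \in \omega : x_j^n \in U_j\} \in p$. Since $p$ is a filter and $F$ is finite, $\bigcap_{j \in F} A_j \in p$, and clearly
\[
\bigcap_{j \in F} A_j \subseteq \{n \in \omega : (x_i^n)_{i\in I} \in U\},
\]
so the latter set is also in $p$. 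This establishes that $(y_i)_{i\in I}$ is the $p$-limit of the given sequence.

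There is no real obstacle here — the result is essentially a restatement of the definition of the product topology in filter-theoretic language. The only point requiring a little care is the reduction to basic open sets in the converse direction, which is justified because to verify the $p$-limit condition it suffices to check neighborhoods drawn from any neighborhood base at $(y_i)_{i\in I}$.
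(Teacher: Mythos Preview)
Your proof is correct and is the standard argument. The paper itself does not prove this proposition; it is stated without proof as a basic fact about $p$-limits, so there is no approach to compare against.
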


\begin{prop}\label{prop_p-limit}
Let $G$ be a topological group and $p \in \omega^{*}$.

\begin{enumerate}

  \item If $(x_n : n \in \omega)$ and $(y_n : n \in \omega)$ are sequences in $G$ and $x, y \in G$ are such that $x = p-\lim(x_n : n \in \omega)$ and $y = p-\lim(y_n : n \in \omega)$, then $x + y = p-\lim (x_n + y_n : n \in \omega)$;

  \item If $(x_n : n \in \omega)$ is a sequence in $G$ and $x \in G$ is such that $x = p-\lim(x_n : n \in \omega)$, then $- x = p-\lim(- x_n : n \in \omega)$.

\end{enumerate}
\end{prop}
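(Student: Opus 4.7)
The plan is to deduce both items from a single general fact: continuous maps preserve $p$-limits. First I would establish the auxiliary claim that if $f\colon X\to Y$ is continuous between topological spaces, $(x_n:n\in\omega)$ is a sequence in $X$, and $x=p-\lim(x_n:n\in\omega)$, then $f(x)=p-\lim(f(x_n):n\in\omega)$. This is immediate from the definitions: given a neighborhood $V$ of $f(x)$ in $Y$, continuity of $f$ makes $f^{-1}[V]$ a neighborhood of $x$ in $X$, so by hypothesis $\{n\in\omega:x_n\in f^{-1}[V]\}\in p$, which is exactly $\{n\in\omega:f(x_n)\in V\}\in p$.

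For item (1), I would apply this to the continuous addition map $+\colon G\times G\to G$. By Proposition \ref{prop_p-limit_product}, the hypotheses $x=p-\lim(x_n:n\in\omega)$ and $y=p-\lim(y_n:n\in\omega)$ give $(x,y)=p-\lim((x_n,y_n):n\in\omega)$ in the product $G\times G$. Composing with $+$ and using the auxiliary claim yields $x+y=p-\lim(x_n+y_n:n\in\omega)$.

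For item (2), the same scheme applies even more directly: the inversion map $-\colon G\to G$ defined by $g\mapsto -g$ is continuous in the topological group $G$, so the auxiliary claim, applied to the sequence $(x_n:n\in\omega)$ with limit $x$, gives $-x=p-\lim(-x_n:n\in\omega)$.

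There is no real obstacle here; the statement is a routine consequence of the definition of $p$-limit and the continuity of the group operations, once the transfer lemma for continuous maps and the product characterization of Proposition \ref{prop_p-limit_product} are in hand. The only thing to be mindful of is using the Hausdorff convention already adopted in the paper so that the notation $p-\lim$ denotes a genuine point rather than a set.
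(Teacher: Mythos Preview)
Your argument is correct and is the standard way to see this: continuous maps preserve $p$-limits, so addition (via Proposition~\ref{prop_p-limit_product}) and inversion transfer the $p$-limit as claimed. The paper does not give a proof of this proposition at all---it is stated as a basic fact---so there is nothing to compare against; your write-up would serve perfectly well as the omitted verification.
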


A \emph{pseudointersection} of a family $\mathcal{G}$ of sets is an infinite set that is almost contained in every member of $\mathcal{G}$. We say that a family $\mathcal{G}$ of infinite sets has the \emph{strong finite intersection property} (SFIP, for short) if every finite subfamily of $\mathcal{G}$ has infinite intersection. The \emph{pseudointersection number} $\mathfrak{p}$ is the smallest cardinality of any $\mathcal{G} \in [\omega]^{\omega}$ with SFIP but with no pseudointersection.

We denote the set of positive natural numbers by $\mathbb{N}$, the integers by $\mathbb{Z}$, the rationals by $\mathbb{Q}$ and the reals by $\mathbb{R}$. The unit circle group $\mathbb{T}$ will be identified with the metric group $(\mathbb{R} / \mathbb{Z}, \delta)$ where $\delta$ is given by $\delta(x + \mathbb{Z}, y + \mathbb{Z}) = \min\{|x - y + a| : a \in \mathbb{Z}\}$ for every $x, y \in \mathbb{R}$. Given a subset $A$ of $\mathbb{T}$, we will denote by $\delta(A)$ the diameter of $A$ with respect to the metric $\delta$. The set of all non-empty open arcs of $\mathbb{T}$ will be denoted by $\mathcal{B}$.

Let $X$ be a set and $G$ be a group. We denote by $G^{X}$ the product $\prod_{x \in X} G_x$ where $G_x = G$ for every $x \in X$. The \emph{support} of $f \in G^{X}$ is the set $\{x \in X : f(x) \neq 0\}$, which will be designated as $\supp f$. The set $\{f \in G^{X} : |\supp f| < \omega\}$ will be denoted by $G^{(X)}$.

The torsion part $T(G)$ of an Abelian group $G$ is the set $\{x \in G : nx = 0 \ \hbox{for some} \ n \in \mathbb{N}\}$. Clearly, $T(G)$ is a subgroup of $G$. For every $n \in \mathbb{N}$, we put $G[n] = \{x \in G : nx = 0\}$. In the case $G = G[n]$, we say that $G$ is \emph{of exponent $n$} provided that $n$ is the minimal positive integer with this property. The order of an element $x \in G$ will be denoted by $\ordem(x)$.

A non-empty subset $S$ of an Abelian group $G$ is said to be \emph{independent} if $0 \not \in S$ and, given distinct elements $s_1, \ldots, s_n$ of $S$ and integers $m_1, \ldots, m_n$, the relation $m_1 s_1 + \ldots + m_n s_n = 0$ implies that $m_i s_i = 0$ for all $i$. This definition implies at once that the group $G$ is a direct sum of cyclic groups if and only if it is generated by an independent subset. The free rank $r(G)$ of $G$ is the cardinality of a maximal independent subset of $G$ such that all of its elements have infinite order. It is easy to verify that $r(G) = |G / T(G)|$ if $r(G)$ is infinite.

The following definition was introduced in \cite{dikranjan&tkachenko}.

\begin{defin}
Let $G$ be an Abelian group and $n \in \mathbb{N} \setminus \{1\}$. A countably infinite subset $S$ of $G$ is said to be \emph{$n$-round} if $nS=\{0\}$ and the restriction of the group homomorphism \[\begin{array}{llll}
                                                  \varphi_{d}: & G & \to & G \\
                                                   & x & \mapsto & d x
                                                \end{array}
  \] to $S$ is finite-to-one for every proper divisor $d$ of $n$.
\end{defin}

Proofs of the following proposition can also be found in \cite{dikranjan&tkachenko}.

\begin{prop}\label{prop_n-round1}
Every infinite set in an Abelian group $G$ of exponent $n$ contains a subset of the form $T + z$, where $z \in G$ and $T$ is a $d$-round subset of $G$ for some divisor $d$ of $n$.
\end{prop}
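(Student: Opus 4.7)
The plan is to obtain the exponent $d$ and the shift $z$ simultaneously from a single minimality argument on a countable subset. First, pass to a countably infinite $S\subseteq A$, and let $D=\{m\in\mathbb{N}:\text{some infinite }S'\subseteq S\text{ has }mS'\text{ finite}\}$. Since $nS=\{0\}$, we have $n\in D$; let $d=\min D$ and fix an infinite $S'\subseteq S$ with $dS'$ finite.

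The first thing I would verify is that $d$ divides $n$. Set $g=\gcd(d,n)$ and write $g=ad+bn$ with $a,b\in\mathbb{Z}$. Because $nx=0$ for every $x\in G$, we have $gx=a(dx)+b(nx)=a(dx)$, so $gS'=a\cdot(dS')$ is finite. The minimality of $d$ then forces $g=d$, hence $d\mid n$.

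Next, pigeonhole on the finite set $dS'$ yields an infinite $S''\subseteq S'$ on which $\varphi_d$ is constant, say $dS''=\{y\}$. Pick any $z\in S''$ (so $dz=y$) and set $T:=S''-z$. Then $T$ is countably infinite, $dT=\{0\}$, and $T+z=S''\subseteq A$, which is exactly the desired form. To finish, $T$ must be $d$-round, i.e., for every proper divisor $d'$ of $d$ the restriction $\varphi_{d'}|_T$ must be finite-to-one. If some fibre $T_1=\{x\in T:d'x=y'\}$ were infinite, then $T_1+z\subseteq S$ would be infinite while $d'(T_1+z)=\{y'+d'z\}$ is finite, placing $d'$ in $D$; but $d'<d=\min D$, a contradiction.

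The step I expect to be most subtle is verifying $d\mid n$: it uses the exponent hypothesis in an essential but non-obvious way, via the B\'ezout relation that transfers ``$dS'$ finite'' to ``$\gcd(d,n)S'$ finite.'' Once this is secured, both the construction of $T$ and its $d$-roundness follow transparently from pigeonhole and the very same minimality property of $d$.
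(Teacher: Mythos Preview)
Your argument is correct. The paper does not supply its own proof of this proposition; it merely cites \cite{dikranjan&tkachenko}, so there is no in-paper argument to compare against. Your minimality device---taking the least $d$ for which some infinite subset of $S$ has finite image under $\varphi_d$---cleanly yields both $d\mid n$ (via the B\'ezout step you flag) and the $d$-roundness of $T$ from a single principle, and every verification goes through as written.
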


\begin{prop}\label{prop_n-round2}
Let $G$ be a non-torsion Abelian group that admits a countably compact group topology.
If a subset $S$ of an Abelian group $G$ is $n$-round for some $n$, then $G$ contains an isomorphic copy of the group $\mathbb{Z}_{n}^{(\mathfrak{c})}$.
\end{prop}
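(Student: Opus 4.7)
The plan is to show that $G[n]$ itself contains a subgroup isomorphic to $\mathbb{Z}_n^{(\mathfrak{c})}$. First note that $G[n]=\ker\varphi_n$ is closed in $G$, hence countably compact in the subspace topology; since it is an abelian group of bounded exponent, Pr\"ufer's theorem supplies a decomposition $G[n]\cong\bigoplus_{i\in I}\mathbb{Z}/q_i\mathbb{Z}$ in which each $q_i$ is a prime power dividing $n$. Writing $n=\prod_{j=1}^{k}p_j^{b_j}$ with the $p_j$ distinct primes, I would reduce the problem to showing that each index set $I_j:=\{i\in I:q_i=p_j^{b_j}\}$ has cardinality at least $\mathfrak{c}$. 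Once that is in hand, for each $j$ I would pick an independent family $\{e_{j,\alpha}:\alpha<\mathfrak{c}\}$ of generators of the corresponding $\mathbb{Z}/p_j^{b_j}\mathbb{Z}$ summands and set $\epsilon_\alpha:=\sum_{j=1}^{k}e_{j,\alpha}$; the Chinese Remainder Theorem gives $\ordem(\epsilon_\alpha)=n$, and projecting any relation $\sum_\alpha m_\alpha\epsilon_\alpha=0$ onto each $p_j$-primary component of $G[n]$ forces $p_j^{b_j}\mid m_\alpha$ for every $j$, hence $n\mid m_\alpha$, so the $\epsilon_\alpha$ are independent and generate the desired copy of $\mathbb{Z}_n^{(\mathfrak{c})}$.

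The main step is to prove $|I_j|\geq\mathfrak{c}$, and here is where countable compactness and $n$-roundness combine. I would fix $j$ and set $d:=n/p_j$, a proper divisor of $n$. Because $\varphi_d$ is continuous, $dG[n]=\varphi_d(G[n])$ is the continuous image of the countably compact space $G[n]$ and is therefore countably compact; as a subspace of the Hausdorff group $G$, it is also Hausdorff. By the $n$-roundness of $S$, $\varphi_d$ is finite-to-one on the countably infinite set $S$, so $\varphi_d(S)\subseteq dG[n]$ is infinite, making $dG[n]$ an infinite countably compact Hausdorff topological group. By the classical theorem of Comfort and Ross, any such group satisfies $|G|=|G|^{\aleph_0}$, and in particular has cardinality at least $\mathfrak{c}$; thus $|dG[n]|\geq\mathfrak{c}$.

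On the algebraic side, a direct computation from the Pr\"ufer decomposition shows that $d(\mathbb{Z}/q_i\mathbb{Z})=0$ unless $q_i=p_j^{b_j}$, in which case $d(\mathbb{Z}/q_i\mathbb{Z})\cong\mathbb{Z}/p_j\mathbb{Z}$. Hence $dG[n]\cong\bigoplus_{i\in I_j}\mathbb{Z}/p_j\mathbb{Z}$, and combined with the cardinality bound just established this forces $|I_j|\geq\mathfrak{c}$, completing the reduction. The main obstacle is precisely this Comfort--Ross cardinality input: the $n$-round structure alone only produces countably many independent elements of order $n$ inside $\langle S\rangle$, so the passage from $\omega$ to $\mathfrak{c}$ genuinely requires the topological hypothesis that $G$ admits a countably compact group topology.
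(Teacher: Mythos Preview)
The paper does not give its own proof of this proposition; it simply records it as a result from Dikranjan and Tkachenko \cite{dikranjan&tkachenko}, so there is no in-paper argument to compare against. Your proof is essentially the standard one and is correct: pass to the closed (hence countably compact) subgroup $G[n]$, take a Pr\"ufer decomposition, and for each prime power $p_j^{b_j}$ in $n$ use the proper divisor $d=n/p_j$ so that $n$-roundness makes $dG[n]$ infinite, while continuity of $\varphi_d$ keeps it countably compact; the cardinality bound then forces $|I_j|\geq\mathfrak{c}$, and the Chinese Remainder assembly of the $\epsilon_\alpha$ is routine.

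One correction of attribution: the result that an infinite pseudocompact (in particular countably compact) Hausdorff topological group $H$ satisfies $|H|=|H|^{\aleph_0}\geq\mathfrak{c}$ is due to van Douwen, not Comfort--Ross. The Comfort--Ross theorem (pseudocompact groups are precompact and $G_\delta$-dense in their completion) is an ingredient in van Douwen's argument, but it does not by itself yield the cardinality statement you invoke. With that citation adjusted, your argument stands.
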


We end this section by presenting some notations that will be used throughout this article.

Let $H \in (\mathbb{Q} / \mathbb{Z})^{(P_0 \times \omega)} \oplus \mathbb{Q}^{(P_1)}$. If $x \in P_0 \times \omega$ and $y \in P_1$, then \[H(x) = \frac{p(H, x)}{q(H, x)} + \mathbb{Z}\] where $p(H, x), q(H, x) \in \mathbb{Z}$, $q(H, x) > 0$, $\gcd\{p(H, x), q(H, x)\} = 1$ and $0 \leq p(H, x) < q(H, x)$ and \[H(y) = \frac{p(H, y)}{q(H, y)}\] where $p(H, y), q(H, y) \in \mathbb{Z}$, $q(H, y) > 0$ and $\gcd\{p(H, y), q(H, y)\} = 1$.

 Also, define \[d(H) = \lcm\{q(H, x) : x \in \supp H\}\}.\] If $x \in P_0 \times \omega$ and $y \in P_1$, put \[a(H, x) = p(H, x)   \frac{d(H)}{q(H, x)}\] and \[a(H, y) = p(H, y)   \frac{d(H)}{q(H, y)}.\] Finally, set \[p(H) = \max\{|p(H, y)| : y \in \supp H\}\]  and \[q(H ) = \max\{q(H, y) : y \in \supp H \}.\]

\section{Countably compact group topologies with convergent sequences}
The following theorem is the Proposition 2.4 of \cite{boero&tomita3}.
\begin{prop} \label{megazord}

Let $G$ be an Abelian group such that

\begin{itemize}

  \item $|G| = |G / T(G)| = \mathfrak{c}$, and

  \item $\forall n, d \in \mathbb{N} \ (d \mid n \rightarrow |d G[n]| < \omega \ \hbox{or} \ |d G[n]| = \mathfrak{c})$.

\end{itemize}

Let $\{P_0, P_1\}$ be a partition of $\mathfrak{c}$ such that $|P_0| = |P_1| = \mathfrak{c}$ and $\omega \subset P_1$. Then there exists a group monomorphism $\varphi: G \to (\mathbb{Q} / \mathbb{Z})^{(P_0 \times \omega)} \oplus \mathbb{Q}^{(P_1)}$ such that \[\{(0, \chi_{\xi}) \in (\mathbb{Q} / \mathbb{Z})^{(P_0 \times \omega)} \oplus \mathbb{Q}^{(P_1)} : \xi \in L_1\} \subset \varphi[G]\] and \[\{(y_{\xi}, 0) \in (\mathbb{Q} / \mathbb{Z})^{(P_0 \times \omega)} \oplus \mathbb{Q}^{(P_1)} : \xi \in \cup_{n \in D} L_n\} \subset \varphi[G]\] where $\chi_{\xi} : P_1 \to \mathbb{Q}$ is given by \[\chi_{\xi}(\mu) = \left\{ \begin{array}{lll}
                                                      1 & \hbox{if} & \mu = \xi \\
                                                      0 & \hbox{if} & \mu \neq \xi
                                                    \end{array}
 \right.\] for every $\xi \in P_1$, $L_1 \in [P_1]^{\mathfrak{c}}$ is such that $\omega \subset L_1$, $D$ is the set of all integers $n > 1$ such that $G$ contains an isomorphic copy of the group $\mathbb{Z}_{n}^{(\mathfrak{c})}$ and $\{L_n : n \in D\}$ is a family of pairwise disjoint elements of $[P_0]^{\mathfrak{c}}$ satisfying

\begin{itemize}

  \item $\ordem(y_{\xi}) = n \ \forall \xi \in L_n, n \in D$ and

  \item $\supp y_{\xi} \subset \{\xi\} \times \omega \ \forall \xi \in \cup_{n \in D} L_n$.

\end{itemize}
\end{prop}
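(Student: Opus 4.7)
The plan is to construct $\varphi$ in three stages by exploiting the structure of the divisible hull $D(G)$. Since $G$ is abelian with $r(G) = \mathfrak{c}$, $D(G)$ decomposes as $\mathbb{Q}^{(\mathfrak{c})} \oplus \bigoplus_{p} \mathbb{Z}(p^{\infty})^{(\kappa_p)}$, where $\kappa_p$ is the $p$-rank of $T(G)$ and the sum runs over all primes. The hypothesis $|dG[n]| \in \omega \cup \{\mathfrak{c}\}$ forces each $\kappa_p$ to be either finite or $\mathfrak{c}$, and the set $D$ consists exactly of those $n > 1$ whose prime-power factorization is witnessed by the ``big'' $\kappa_p$'s. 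Since $\mathbb{Z}(p^{\infty})$ sits as the $p$-primary direct summand of $\mathbb{Q}/\mathbb{Z}$, the target $(\mathbb{Q}/\mathbb{Z})^{(P_0 \times \omega)} \oplus \mathbb{Q}^{(P_1)}$ has ample room to receive $D(G)$; the real problem is steering the embedding so that a pre-selected independent set of $G$ lands on the prescribed elements.

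Next I would locate the required elements of $G$. Using $r(G) = \mathfrak{c}$, choose an independent family $\{e_\xi : \xi \in L_1\} \subset G$ of infinite-order elements with $L_1 \in [P_1]^{\mathfrak{c}}$ and $\omega \subset L_1$. For each $n \in D$ use the embedded copy of $\mathbb{Z}_n^{(\mathfrak{c})}$ to select an independent family $\{z_\xi : \xi \in L_n\} \subset G[n]$, where $\{L_n : n \in D\}$ is a family of pairwise disjoint members of $[P_0]^{\mathfrak{c}}$ (possible since $D$ is countable and $|P_0| = \mathfrak{c}$). Set $E = \{e_\xi : \xi \in L_1\} \cup \bigcup_{n\in D}\{z_\xi : \xi \in L_n\}$ and define $\varphi_0 : \langle E \rangle \to (\mathbb{Q}/\mathbb{Z})^{(P_0\times\omega)} \oplus \mathbb{Q}^{(P_1)}$ by $\varphi_0(e_\xi) = (0, \chi_\xi)$ and $\varphi_0(z_\xi) = (y_\xi, 0)$, taking $y_\xi$ to be the element of $(\mathbb{Q}/\mathbb{Z})^{(P_0 \times \omega)}$ whose only nonzero coordinate is $(\xi, 0)$ with value $\frac{1}{n} + \mathbb{Z}$. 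Independence of $E$ and matching of orders make $\varphi_0$ a well-defined monomorphism whose image contains the prescribed elements.

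Finally, I would extend $\varphi_0$ to a monomorphism of $G$ by a transfinite recursion of length $\mathfrak{c}$. Enumerate $G \setminus \langle E \rangle = \{g_\alpha : \alpha < \mathfrak{c}\}$ and at stage $\alpha$ extend the current partial map to $\langle E \cup \{g_\beta : \beta \leq \alpha\}\rangle$ by using only finitely many fresh coordinates drawn from the unused reserves $(P_0 \times \omega) \setminus \bigcup_{n\in D}(L_n\times \omega)$ and $P_1 \setminus L_1$. Each column $\{\xi\} \times \omega$ (for $\xi$ outside $\bigcup_{n \in D} L_n$) serves as a built-in divisible hull for $p$-primary torsion, absorbing with finite support any new torsion element and its divisions; for new infinite-order contributions one picks fresh indices in $P_1 \setminus L_1$. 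The main obstacle is the simultaneous demand of staying inside the direct sum and preserving injectivity: divisibility of the target always furnishes an abstract algebraic extension, but one must keep image supports finite and ensure that a freshly chosen coordinate distinguishes the image of $g_\alpha$ from what has already been placed. Standard bookkeeping, using that $|G| = \mathfrak{c}$ while $\mathfrak{c}$ coordinates remain unused at every stage, handles both requirements and completes the construction.
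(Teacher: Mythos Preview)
The paper does not prove this proposition: it is quoted as Proposition~2.4 of \cite{boero&tomita3} and used as a black box, so there is no argument in the present paper to compare yours against.

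Your outline is reasonable, but the second paragraph contains a genuine gap. You form $E=\{e_\xi:\xi\in L_1\}\cup\bigcup_{n\in D}\{z_\xi:\xi\in L_n\}$ and then invoke ``independence of $E$'' to make $\varphi_0$ a well-defined monomorphism; however, you have only secured independence of each family $\{z_\xi:\xi\in L_n\}$ \emph{separately}, inside its own embedded copy of $\mathbb{Z}_n^{(\mathfrak c)}$. Nothing you wrote forces the union over all $n\in D$ to be independent, and in general it is not. For a concrete obstruction take $G=\mathbb{Z}_4^{(\mathfrak c)}$, so that $2,4\in D$: every element of order~$2$ is twice an element of order~$4$, and if the copy of $\mathbb{Z}_2^{(\mathfrak c)}$ you select sits inside $2\cdot\mathbb{Z}_4^{(\mathfrak c)}$ on coordinates overlapping those used for the order-$4$ generators, you obtain relations $z_\xi-2z_{\xi'}=0$ with $z_\xi\neq 0$, contradicting independence. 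Since the target elements $(y_\xi,0)$ \emph{are} independent (their supports are pairwise disjoint), well-definedness of $\varphi_0$ is equivalent to independence of the full torsion family $\bigcup_{n\in D}\{z_\xi:\xi\in L_n\}$, so this point cannot be skipped.

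The repair is not hard but is the substantive algebra here: work prime by prime inside $T(G)$, and for each prime $p$ choose the $p$-primary components of the various $z_\xi$ from pairwise disjoint blocks of coordinates in a fixed copy of $\bigoplus_k\mathbb{Z}_{p^k}^{(\mu_{p,k})}$ sitting in the $p$-component of $T(G)$ (the hypothesis on $|dG[n]|$ guarantees the needed blocks have size~$\mathfrak c$), then assemble across primes by the Chinese remainder decomposition of each $n\in D$. Once $E$ is genuinely independent, your definition of $\varphi_0$ and the transfinite extension using one fresh reserved coordinate per step are correct.
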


We will show that $G$ admits a countably compact Hausdorff group topology with non-trivial convergent sequences. Since $G$ is isomorphic to $\varphi[G]$, it suffices to endow any isomorphic copy of $\varphi[G]$ with such a topology. Therefore, from now on, we will identify $G$ with $\varphi[G]$ and consider $G$ to be a subgroup of $(\mathbb{Q} / \mathbb{Z})^{(P_0 \times \omega)} \oplus \mathbb{Q}^{(P_1)}$ as in Proposition \ref{megazord}.

Our strategy is to construct a group monomorphism $\Phi : (\mathbb{Q} / \mathbb{Z})^{(P_0 \times \omega)} \oplus \mathbb{Q}^{(P_1)} \to \mathbb{T}^{\mathfrak{c}}$ so that $\Phi[G]$ is countably compact and has non-trivial convergent sequences when endowed with the subspace topology induced by $\mathbb{T}^{\mathfrak{c}}$. Such an embedding will be obtained ``coordinate by coordinate" --- that is, for each $\alpha < \mathfrak{c}$ we will construct a group homomorphism $\phi_{\alpha}: (\mathbb{Q} / \mathbb{Z})^{(P_0 \times \omega)} \oplus \mathbb{Q}^{(P_1)} \to \mathbb{T}$ satisfying four significant conditions and $\Phi$ will be the diagonal product of the family $\{\phi_{\alpha} : \alpha < \mathfrak{c}\}$. One of these conditions will guarantee that $\Phi$ is injective, two of them will ensure that $\Phi[G]$ is countably compact and the other one will witness that $\Phi[G]$ has non-trivial convergent sequences.

Each mapping $\phi_{\alpha}$ will be defined in two stages: we will first construct a group homomorphism from a countable subgroup of $(\mathbb{Q} / \mathbb{Z})^{(P_0 \times \omega)} \oplus \mathbb{Q}^{(P_1)}$ into $\mathbb{T}$ by induction and then we will extend it to the whole group $(\mathbb{Q} / \mathbb{Z})^{(P_0 \times \omega)} \oplus \mathbb{Q}^{(P_1)}$. In every inductive step, we will approximate the values of the group homomorphism by non-empty open arcs of $\mathbb{T}$ with suitable properties. To make this possible, we must deal with appropriate families of sequences in $G$ which we will start to sort now.

\begin{defin}\label{def_tipos}
Let $g : \omega \to G \subset (\mathbb{Q} / \mathbb{Z})^{(P_0 \times \omega)} \oplus \mathbb{Q}^{(P_1)}$. Write $g=g_0+g_1$ with $\supp g_0(n)\subset P_0\times\omega$ and $\supp g_1(n)\subset P_1$ for all $n\in\omega$. Also write $g_1=g_{1,0}+g_{1,1}$ in the unique way such that for all $n\in\omega$, $\supp g_{1,0}(n)\subset\omega$ and $\supp g_{1,1}(n)\subset P_1\backslash\omega$.  \\

We say that $g$ is \emph{of type 1} if $\supp g_{1,1}(n) \setminus \cup_{m < n} \supp g_{1,1}(m) \neq \emptyset$ for every $n \in \omega$.

We say that $g$ is \emph{of type 2} if $q(g_{1,1}(n)) > n$ for every $n \in \omega$.

We say that $g$ is \emph{of type 3} if $\{q(g_{1,1}(n)) : n \in \omega\}$ is bounded and $p(g_{1,1}(n)) > n$ for every $n \in \omega$.

We say that $g$ is \emph{of type 4} if $q(g_{1,0}(n)) > n$ for every $n \in \omega$.

We say that $g$ is \emph{of type 5} if there exists $M \in \bigcap_{n \in \omega} \supp g_{1,0}(n)$ such that $\{q(g(n)(M)) : n \in \omega\}$ is bounded and $|p(g(n)(M))| > n$ for every $n \in \omega$.

We say that $g$ is \emph{of type 6} if for each $n \in \omega$, there exists $M_n \in \supp g_{1,0}(n) \setminus \cup_{m < n} \supp g_{1,0}(m)$ such that \[\left\{ \frac{g(n)(M_n)}{M_n !} : n \in \omega \right\} \to 0\] strictly monotonically.

We say that $g$ is \emph{of type 7} if for each $n \in \omega$, there exists $M_n \in \supp g_{1,0}(n) \setminus \cup_{m < n} \supp g_{1,0}(m)$ such that \[\left\{ \frac{g(n)(M_n)}{M_n !} : n \in \omega \right\} \to \xi \in \mathbb{R} \setminus \mathbb{Q}\] strictly monotonically.

We say that $g$ is \emph{of type 8} if for each $n \in \omega$, there exists $M_n \in \supp g_{1,0}(n) \setminus \cup_{m < n} \supp g_{1,0}(m)$ such that \[\left\{ \frac{g(n)(M_n)}{M_n !} : n \in \omega \right\} \to r \in \{- \infty, + \infty\}\] strictly monotonically.

We say that $g$ is \emph{of type 9} if $\left\{ \dfrac{g(n)(M)}{M!} : M \in \supp g_{1,0}(n), n \in \omega \right\}$ is finite and $|\supp g_{1,0}(n)| > n$ for every $n \in \omega$.

We say that $g$ is \emph{of type 10} if $\{q(g_{1,0}(n)) : n \in \omega\}$ and $\{p(g_{1,0}(n)) : n \in \omega\}$ are bounded and $\supp g_{1,0}(n) \setminus \cup_{m < n} \supp g_{1,0}(m) \neq \emptyset$ for every $n \in \omega$.

We say that $g$ is \emph{of type 11} if $q(g_0(n)) > n$ for every $n \in \omega$.

We say that $g$ is \emph{of type 12} if \begin{itemize}

          \item $\supp g_0(n) \setminus \cup_{m < n} \supp g_0(n) \neq \emptyset$ for every $n \in \omega$,

          \item $g_1(n) = 0$ for every $n \in \omega$,

          \item there exists $k \in D$ such that $\ordem(g(n)) = k$ for every $n \in \omega$ and

          \item $\{g(n) : n \in \omega\}$ is a $k$-round subset of $G$.

        \end{itemize}
    
    We let $\mathcal H$ be the set of the constant sequence equal to $0$ and of all sequences of type 1, 2, 3, 4, 5, 6, 7, 8, 9, 10, 11 or 12.
\end{defin}

The set $\mathcal{H}$ enables us not only to ``recover" a subsequence of any sequence in $G$ but also to construct the coordinates $\phi_{\alpha}$ of the embedding $\Phi$. The following two propositions support these statements. Their proofs will be presented in Sections 3 and 4.

\begin{prop}\label{prop_subseq}
If $g: \omega \to G \subset (\mathbb{Q} / \mathbb{Z})^{(P_0 \times \omega)} \oplus \mathbb{Q}^{(P_1)}$, then there exist $l < 2$, $h \in \mathcal{H}$, $c \in G$, $F \in [\omega]^{< \omega}$, $p_i, q_i \in \mathbb{Z}$ with $q_i \neq 0$ for every $i \in F$, $\{M_{n}^{i} : n \in \omega\}\subset \omega$ such that $q_i \mid M_{n}^{i}$ for every $i \in F$ and $j: \omega \to \omega$ strictly increasing such that \[g \circ j(n) = l   h(n) + c - \sum_{i \in F} \frac{p_i}{q_i}   f(M_{n}^{i})\] for every $n \in \omega$, where $f : \omega \to G$ is given by $f(n) = (0, n!   \chi_n)$ for every $n \in \omega$.
\end{prop}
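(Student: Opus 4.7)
The plan is to prove Proposition \ref{prop_subseq} by a finite case analysis with successive subsequence extractions, treating the three components $g_{1,1}$, $g_{1,0}$ and $g_0$ in order. The guiding principle is that types 1--3, 4--10 and 11--12 each capture one ``irreducible'' nontrivial behavior of the respective component, while the remaining behavior is absorbed either into the constant $c$ or into the $f$-correction $\sum_{i \in F} \frac{p_i}{q_i} f(M_n^i)$, which by definition of $f$ lives on the $\omega \subset P_1$ piece.

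For $g_{1,1}$: if $\bigcup_n \supp g_{1,1}(n)$ is infinite, a greedy extraction introduces a new support element at each step and gives type 1 for $h = g \circ j$ (with $l = 1$, $c = 0$, $F = \emptyset$). Otherwise the union is a finite set $S$, so $g_{1,1}$ maps into $\mathbb{Q}^S$, and a further subsequence realizes one of $q(g_{1,1}(n)) > n$ (type 2), $q$ bounded with $|p(g_{1,1}(n))| > n$ (type 3), or both bounded, in which case $g_{1,1}$ is eventually constant and that constant is folded into $c$. For $g_{1,0}$, assuming $g_{1,1}$ constant: the $f$-correction absorbs exactly the structured contributions $\frac{p}{q} M! \chi_M$ at positions $M$ with $q \mid M$. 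A persistent $M \in \bigcap_n \supp g_{1,0}(n)$ with $q(g(n)(M))$ or $|p(g(n)(M))|$ unbounded yields type 4 or type 5. At fresh positions $M_n \in \supp g_{1,0}(n) \setminus \bigcup_{m<n} \supp g_{1,0}(m)$, extract a subsequence so that $g(n)(M_n)/M_n!$ converges strictly monotonically to some $r \in \mathbb{R} \cup \{\pm\infty\}$: the cases $r = 0$, $r \notin \mathbb{Q}$, $r = \pm \infty$ give types 6, 7, 8 directly, and $r = p/q \in \mathbb{Q}$ is reduced to type 6 by further refining to $M_n$ with $q \mid M_n$ and inserting $-\frac{p}{q} f(M_n)$ into the $f$-correction. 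If no fresh positions arise in this way, either bounded $p, q$ with new-support elements gives type 10, or a finite set of ratios with $|\supp g_{1,0}(n)| > n$ gives type 9 after an analogous correction.

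For $g_0$, which is analyzed only when the previous phases have already absorbed $g_1 \circ j$ into $c_1$ minus an $f$-correction (allowing the residual $h$ to satisfy $h_1 \equiv 0$): unbounded $q(g_0(n))$ yields type 11, while otherwise the residual $g_0 \circ j - c_0$ lands in $G[n_0]$ for some exponent $n_0$ and Proposition \ref{prop_n-round1} produces a translate $T + z$ of a $k$-round subset $T$ with $k \in D$, giving type 12 after restricting to $T$ and folding $z$ into $c$. If even $g_0$ is absorbable into a constant, we finish with $l = 0$ and $h = 0$.

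The hard part is phase 2 when $r = p/q$ is a nonzero rational. The subsequence must be chosen so that simultaneously the $M_n$ remain fresh, $q$ divides every $M_n$ so that the $f$-correction $\frac{p}{q} f(M_n)$ satisfies the divisibility hypothesis $q_i \mid M_n^i$, and, after subtraction, the residue retains strict monotone convergence to $0$. This forces an interleaved extraction combining Ramsey-style freshness, divisibility refinement and a final monotonicity step, and is the reason the statement forces a single common subsequence $j$ across all triples $(p_i, q_i, M_n^i)$. A milder but parallel coordination issue arises in phase 3, where the translate $z$ produced by Proposition \ref{prop_n-round1} must be compatible with the subsequence already fixed in phase 2.
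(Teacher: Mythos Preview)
Your three-phase strategy (handle $g_{1,1}$ first, then $g_{1,0}$, then $g_0$, with the $f$-correction absorbing the structured rational contributions on $\omega$ and Proposition~\ref{prop_n-round1} producing the $k$-round subset for type~12) is exactly the paper's approach; the paper's nine numbered cases are just a flattening of your three phases.

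A few small corrections. Type~4 does not require a persistent coordinate: it asks only that the \emph{maximum} $q(g_{1,0}(n))$ be unbounded, so you should split phase~2 first on whether $\{q(g_{1,0}(n))\}$ is bounded, and only afterwards look for a persistent $M$ (that is where type~5 enters). Type~9 is applied directly, without any $f$-correction; the correction appears instead in the complementary subcase where the ratio set $Q$ is finite \emph{and} the support sizes are bounded, in which case each of the finitely many coordinates contributes one term $\frac{p_i}{q_i} f(M_n^i)$ and the residual $g_{1,0}$ becomes constant, sending you to phase~3. Finally, the ``hard part'' you flag is milder than you suggest: once $(M_n)$ is injective (which freshness already gives), all but finitely many $M_n$ satisfy $M_n \geq q$, hence $q \mid M_n!$, so $\frac{p}{q} f(M_n) \in G$ automatically on a cofinite set; and after subtracting $r\cdot f(M_n)$ the residual ratio $\tilde g(n)(M_n)/M_n! = g(n)(M_n)/M_n! - r$ tends to $0$, so a single further refinement recovers strict monotonicity. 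No interleaved Ramsey-divisibility-monotonicity argument is needed.
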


Before stating the next proposition, we fix an enumeration $\{H_{\alpha} : \alpha < \mathfrak{c}\}$ of $(\mathbb{Q} / \mathbb{Z})^{P_0 \times \omega} \oplus \mathbb{Q}^{(P_1)} \setminus \{(0, 0)\}$. We also fix an enumeration $\{h_{\xi} : \xi \in J_1 \cup \bigcup_{n \in D} J_n\}$ of $\mathcal{H}$, where $J_1 \in [L_1]^{\mathfrak{c}}$ and $J_n \in [L_n]^{\mathfrak{c}}$ for every $n \in D$, satisfying the following conditions:

\begin{itemize}

  \item if $\xi \in J_1$, then $h_{\xi}$ is of type $i \in \{1, \ldots, 11\}$ or $h_{\xi}$ is $0$;

  \item if there exists $k \in D$ such that $\xi \in J_k$, then $h_{\xi}$ is of type 12 and $\ordem(h_{\xi}(n)) = k$ for every $n \in \omega$;

  \item $\bigcup_{n \in \omega} \supp h_{\xi}(n) \subset (\xi \times \omega) \cup \xi$ for every $\xi \in J_1 \cup \bigcup_{n \in D} J_n$.

\end{itemize}

We may write $h_\xi=h_\xi^0+h^{1, 0}_\xi+h^{1, 1}_\xi$ in the only way such that $\supp h^0_\xi\subset P_0\times \omega$, $\supp h^{1, 0}_\xi\subset \omega$, $\supp h^{1, 1}_\xi\subset P_1\setminus \omega$, and let $h^1_\xi=h^{1, 0}_\xi+h^{1, 1}_\xi$.

\begin{prop}[$\mathfrak{p} = \mathfrak{c}$]\label{prop_hom}
For each $\alpha < \mathfrak{c}$ and each $\xi \in J_1 \cup \bigcup_{n \in D} J_n$ there exists $S_{\xi, \alpha} \in [\omega]^{\omega}$ such that if $\alpha < \beta < \mathfrak{c}$, then $S_{\xi, \beta} \subset^{*} S_{\xi, \alpha}$. There also exists a group homomorphism $\phi_{\alpha} : (\mathbb{Q} / \mathbb{Z})^{(P_0 \times \omega)} \oplus \mathbb{Q}^{(P_1)} \to \mathbb{T}$ satisfying the following conditions:

\begin{enumerate}[label=(\roman*)]

  \item $\phi_{\alpha}(H_{\alpha}) \neq 0 + \mathbb{Z}$;

  \item if $\xi \in J_1$, then the sequence $(\phi_{\alpha}(h_{\xi}(n)) : n \in S_{\xi, \alpha})$ converges to $\phi_{\alpha}(0, \chi_{\xi})$;

  \item if $\xi \in \bigcup_{n \in D} J_n$, then the sequence $(\phi_{\alpha}(h_{\xi}(n)) : n \in S_{\xi, \alpha})$ converges to $\phi_{\alpha}(y_{\xi}, 0)$;

  \item for each $p \in \mathbb{Z} \setminus \{0\}$, the sequence $\left( \phi_{\alpha} \left( 0, \frac{1}{p}   n!   \chi_{n} \right) : n \in \omega \right)$ converges to $0 + \mathbb{Z}$.

\end{enumerate}
\end{prop}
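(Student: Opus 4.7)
The plan is to construct $\phi_\alpha$ and the sets $S_{\xi,\alpha}$ simultaneously by transfinite recursion on $\alpha<\mathfrak{c}$. At stage $\alpha$, the family $\{S_{\xi,\beta}:\beta<\alpha\}$ already constructed is $\subset^{*}$-decreasing and has cardinality $|\alpha|<\mathfrak{c}=\mathfrak{p}$, so it has the strong finite intersection property and admits a pseudointersection, which we take as $S_{\xi,\alpha}$. This instantly gives $S_{\xi,\alpha}\subset^{*}S_{\xi,\beta}$ for every $\beta<\alpha$, i.e.\ the chain condition of the proposition. This is the only place where $\mathfrak{p}=\mathfrak{c}$ enters; once the $S_{\xi,\alpha}$ are fixed, the production of $\phi_\alpha$ is a purely combinatorial matter.

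For $\phi_\alpha$ itself I would follow the two-stage blueprint announced before the statement. First, build the homomorphism on a suitable countable subgroup $G_\alpha$ of $(\mathbb{Q}/\mathbb{Z})^{(P_0\times\omega)}\oplus\mathbb{Q}^{(P_1)}$ by induction, using open arcs of $\mathbb{T}$ to approximate, and then extend to the whole group using divisibility of $\mathbb{T}$, equivalently its injectivity in the category of abelian groups. The inductive stage runs as a nested sequence of partial homomorphisms $\phi_\alpha^n:B_n\to\mathbb{T}$, with $B_n$ finitely generated; at step $n$ we enlarge $B_n$ to $B_{n+1}$ by adjoining, and prescribing open arcs of diameter less than $1/n$ for, finitely many new generators: the element $H_\alpha$ together with any arc avoiding $0+\mathbb{Z}$, securing (i); the element $h_\xi(M_\xi^n)$ for each $\xi$ in a finite, eventually cofinal $F_n\subset J_1\cup\bigcup_{k\in D}J_k$, with $M_\xi^n\in S_{\xi,\alpha}$ past every previous index, the arc being centred on the target $\phi_\alpha(0,\chi_\xi)$ or $\phi_\alpha(y_\xi,0)$, securing (ii) and (iii); and the element $(0,\tfrac{1}{p}n!\chi_n)$ for each $p$ in a finite, eventually cofinal $P_n\subset\mathbb{Z}\setminus\{0\}$, with an arc shrinking to $0+\mathbb{Z}$, securing (iv).

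The technical heart is the feasibility of one such step: we must extend $\phi_\alpha^n$ into $B_{n+1}$ so that each new generator lands inside its prescribed arc while remaining a homomorphism. This is where the case split over the twelve types of Definition \ref{def_tipos} occurs, and each type exploits a different feature. Types 1, 10 and 12 deliver ``fresh'' coordinates of $h_\xi(M_\xi^n)$ outside the support of everything previously constrained, so the value of $\phi_\alpha$ on such a coordinate can be chosen freely to drop the image into its arc. Types 2, 3, 4, 5 and 11 lack fresh coordinates but provide growing denominators or numerators, which yield arbitrarily small or arbitrarily dense orbits in $\mathbb{T}$ and hence let the image be steered into the arc. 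Types 6, 7, 8 and 9 are governed by the controlled asymptotic behaviour of $g(n)(M_n)/M_n!$; the arcs there must be placed consistently with the values that $\phi_\alpha$ has already assigned to the elements $f(n)=(0,n!\chi_n)$ of $B_n$, values which are themselves being squeezed towards $0+\mathbb{Z}$ by (iv). In every case the outcome is the solvability of a finite system of linear congruences over $\mathbb{T}$ within arcs of the prescribed diameter, which reduces to elementary density arguments in $\mathbb{T}$.

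The main obstacle I foresee is the simultaneous compatibility of all the arc constraints at one step. Different $\xi\in F_n$ may overlap in support, and the elements $(0,\tfrac{1}{p}n!\chi_n)$ share the coordinates $(0,\chi_m)$ with potentially every $h_\xi$. The property that keeps this manageable is the third condition imposed on the enumeration $\{h_\xi\}$, namely $\bigcup_n\supp h_\xi(n)\subset(\xi\times\omega)\cup\xi$, which confines the mutual interactions among distinct $h_\xi$'s to coordinates in $\omega\subset P_1$ — precisely those coordinates whose values are already being pushed towards $0+\mathbb{Z}$ by (iv). The long part of the real proof will be verifying, for each of the twelve types separately, that the arcs can genuinely be chosen as claimed at every step without undoing a constraint imposed earlier; this will demand delicate choices of the indices $M_\xi^n$ inside $S_{\xi,\alpha}$ and of the free-coordinate values of $\phi_\alpha$.
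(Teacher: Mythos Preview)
Your overall architecture is right --- transfinite recursion on $\alpha$, pseudointersections via $\mathfrak p=\mathfrak c$, a countable inductive construction with a case split over the twelve types, then extension by divisibility --- but there is a genuine gap in the order of operations. You take $S_{\xi,\alpha}$ to \emph{be} the pseudointersection of $\{S_{\xi,\beta}:\beta<\alpha\}$ and then try to build $\phi_\alpha$ so that $(\phi_\alpha(h_\xi(n)):n\in S_{\xi,\alpha})$ converges. That cannot work: your inductive stage only controls the images $\phi_\alpha(h_\xi(M_\xi^n))$ for the \emph{particular} indices $M_\xi^n$ you select, and those selections are forced by the type of $h_\xi$ (you need $b_m$ large enough that, e.g., $q(h_\xi^{1,1}(b_m))r_m>1$ in type~2, or $\supp h_\xi(b_m)\setminus G_m^0\neq\emptyset$ in type~12). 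You have no control over $\phi_\alpha(h_\xi(n))$ for the remaining $n\in S_{\xi,\alpha}$, so convergence along the whole pseudointersection is not secured. The paper repairs this by letting the pseudointersection be an \emph{input} $R_{\xi,\alpha}$ and letting the construction \emph{output} $S_{\xi,\alpha}=\{b_m:\theta_m=\xi\}\subset R_{\xi,\alpha}$; the $\subset^*$-chain condition still holds because $S_{\xi,\alpha}\subset R_{\xi,\alpha}\subset^*S_{\xi,\beta}$ for $\beta<\alpha$.

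A second, related point: your ``finite, eventually cofinal $F_n\subset J_1\cup\bigcup_{k\in D}J_k$'' cannot exhaust that index set, which has size $\mathfrak c$. The detailed arc construction only handles the countably many $\xi$ lying in a fixed countable $E$ (closed under the supports of the relevant $h_\xi$). For the remaining $\xi\notin E$ the paper does something quite different: since $\bigcup_n\supp h_\xi(n)\subset(\xi\times\omega)\cup\xi$, by the time one reaches $\xi$ in a transfinite extension the values $\phi_\alpha(h_\xi(n))$ are already determined, so one uses sequential compactness of $\mathbb T$ to choose $S_{\xi,\alpha}\in[R_{\xi,\alpha}]^\omega$ along which this sequence converges, and then \emph{defines} $\phi_\alpha(0,\chi_\xi)$ (resp.\ $\phi_\alpha(y_\xi,0)$) to be that limit. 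This step is absent from your outline and is essential; without it conditions (ii) and (iii) are established only for countably many $\xi$.
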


We end this section by showing how Propositions \ref{prop_subseq} and \ref{prop_hom} can be used to endow a non-torsion Abelian group of size continuum that admits a countably compact Hausdorff group topology with a countably compact Hausdorff group topology containing non-trivial convergent sequences.

\begin{teo}[$\mathfrak{p} = \mathfrak{c}$]
If $G$ is an Abelian group such that $|G| = |G / T(G)| = \mathfrak{c}$ and $\forall n, d \in \mathbb{N} \ (d \mid n \rightarrow |d G[n]| < \omega \ \hbox{or} \ |d G[n]| = \mathfrak{c})$, then $G$ admits a countably compact Hausdorff group topology with non-trivial convergent sequences.
\end{teo}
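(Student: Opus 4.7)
The plan is to embed $G$ into $\mathbb{T}^{\mathfrak c}$ via a diagonal product of the homomorphisms produced by Proposition \ref{prop_hom}, and pull the subspace topology back to $G$.

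First, identify $G$ with its image under the monomorphism of Proposition \ref{megazord}, so $G\subset(\mathbb{Q}/\mathbb{Z})^{(P_0\times\omega)}\oplus\mathbb{Q}^{(P_1)}$ and contains the witnesses $(0,\chi_\xi)$ and $(y_\xi,0)$. Fix the enumerations $\{H_\alpha:\alpha<\mathfrak c\}$ of the nonzero elements of the ambient group and $\{h_\xi:\xi\in J_1\cup\bigcup_{n\in D}J_n\}$ of $\mathcal H$, as in the preamble to Proposition \ref{prop_hom}. For each $\alpha<\mathfrak c$, Proposition \ref{prop_hom} yields a homomorphism $\phi_\alpha$ and sets $S_{\xi,\alpha}$ satisfying (i)--(iv). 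Let $\Phi$ be the diagonal product of the $\phi_\alpha$; since every nonzero element of the ambient group equals some $H_\alpha$, condition (i) makes $\Phi$ injective, so the topology on $G$ pulled back from the subspace topology on $\Phi[G]\subset\mathbb{T}^{\mathfrak c}$ is a Hausdorff group topology.

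A non-trivial convergent sequence comes for free from condition (iv) with $p=1$: the elements $(0,n!\,\chi_n)$ lie in $G$ (because $\omega\subset L_1$), are pairwise distinct, and $\phi_\alpha(0,n!\,\chi_n)\to 0+\mathbb Z$ for every $\alpha$. Proposition \ref{prop_p-limit_product} then lifts this to $\Phi(0,n!\,\chi_n)\to 0$ in $\mathbb{T}^{\mathfrak c}$, and injectivity of $\Phi$ preserves distinctness. For countable compactness, take an arbitrary $g:\omega\to G$ and apply Proposition \ref{prop_subseq} to obtain data $l,h,c,F,p_i,q_i,M_n^i,j$ with $g\circ j(n)=l\,h(n)+c-\sum_{i\in F}\frac{p_i}{q_i}f(M_n^i)$ and $q_i\mid M_n^i$. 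If $l\,h$ is the zero sequence, set $t=0$ and let $p$ be any free ultrafilter; otherwise $h=h_\xi$ for a unique $\xi\in J_1\cup\bigcup_{k\in D}J_k$, set $t=(0,\chi_\xi)$ or $t=(y_\xi,0)$ according to which of conditions (ii), (iii) applies, and extend the $\subset^*$-decreasing chain $\{S_{\xi,\alpha}:\alpha<\mathfrak c\}$ (which has the finite intersection property together with the Fr\'echet filter, every $S_{\xi,\alpha}$ being infinite) to a free ultrafilter $p\in\omega^*$ via Zorn's lemma. The claim is that $\Phi(t+c)$ is a $p$-limit of $\Phi\circ g\circ j$, which provides an accumulation point of the original sequence inside $\Phi[G]$ and thereby establishes countable compactness.

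The hard part will be this coordinatewise $p$-limit verification, where the four conditions of Proposition \ref{prop_hom} interlock. For each $\alpha<\mathfrak c$, condition (ii) or (iii) supplies $\phi_\alpha(h(n))\to\phi_\alpha(t)$ along $S_{\xi,\alpha}\in p$, so the $h$-term contributes $\phi_\alpha(t)$ to the $p$-limit; for each correction term, $q_i\mid M_n^i$ rewrites $\frac{p_i}{q_i}f(M_n^i)$ as $p_i\cdot(0,\frac{1}{q_i}M_n^i!\,\chi_{M_n^i})$, and condition (iv) forces $\phi_\alpha$ of this to tend to $0+\mathbb Z$ provided $M_n^i\to\infty$. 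The main subtlety I would address is ensuring $M_n^i\to\infty$: when some $\{M_n^i:n\in\omega\}$ is finite, thin $j$ further so that $M_n^i$ is eventually constant and absorb the resulting constant into $c$; otherwise thin to make $M_n^i$ strictly increasing. Proposition \ref{prop_p-limit} then assembles the coordinatewise limits into $\phi_\alpha(t+c)$ for every $\alpha$, and Proposition \ref{prop_p-limit_product} delivers $\Phi(t+c)$ as the required $p$-limit, completing the argument.
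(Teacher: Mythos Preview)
Your argument is essentially the paper's: diagonal embedding via Proposition~\ref{prop_hom}, injectivity from (i), the convergent sequence from (iv), and countable compactness by choosing a free ultrafilter through the chain $\{S_{\xi,\alpha}:\alpha<\mathfrak c\}$ and combining the $p$-limits of the pieces from Proposition~\ref{prop_subseq}. You are also right to flag the point the paper leaves implicit, namely that one needs $M_n^i\to\infty$ in order to conclude that $\phi_\alpha\bigl(\tfrac{p_i}{q_i}f(M_n^i)\bigr)\to 0$ from condition (iv).

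However, your proposed repair---thinning $j$ after the fact---does not work as stated. Once you replace $j$ by $j\circ k$, the term $h(n)$ becomes $h(k(n))$, and to recover the $p$-limit $\phi_\alpha(t)$ for every $\alpha$ you would need the ultrafilter $p$ to contain each $k^{-1}[S_{\xi,\alpha}]$. But $\operatorname{ran}(k)$ is determined solely by the $M_n^i$, while the $S_{\xi,\alpha}$ come from Proposition~\ref{prop_hom}; there is no reason $\operatorname{ran}(k)\cap S_{\xi,\alpha}$ should even be infinite, so such a $p$ need not exist. The correct resolution is simpler: inspecting Case~8 in the proof of Proposition~\ref{prop_subseq}, whenever $F\neq\emptyset$ the sequences $(M_n^i:n\in\omega)$ are already injective (with $M_n^i\ge q_i$), so no thinning is needed and the correction terms go to $0$ directly. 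Alternatively, you can avoid thinning altogether by observing that for your fixed $p$ each $M_n^i$ is either $p$-unbounded (and then condition~(iv) applies) or $p$-constant equal to some $M^i$ with $q_i\mid M^i!$, in which case $\tfrac{p_i}{q_i}f(M^i)\in G$ and can be absorbed into $c$ at the level of $p$-limits rather than by reindexing.
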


\begin{proof}
It follows from Proposition \ref{prop_hom} (i) that \[\begin{array}{cccc}
                                                        \Phi : & (\mathbb{Q} / \mathbb{Z})^{(P_0 \times \omega)} \oplus \mathbb{Q}^{(P_1)} & \to & \mathbb{T}^{\mathfrak{c}} \\
                                                               & H & \mapsto & \Phi(H)
                                                      \end{array}
\] given by $\Phi(H)(\alpha) = \phi_{\alpha}(H)$ for every $\alpha < \mathfrak{c}$ is a group monomorphism. Thus, $\Phi[G]$ is isomorphic to $G$ and since $\mathbb{T}^{\mathfrak{c}}$ is a Hausdorff topological group, the subspace topology induced by $\mathbb{T}^{\mathfrak{c}}$ turns $\Phi[G]$ into a Hausdorff topological group.

It follows from Proposition \ref{prop_hom} (iv) that if $p \in \mathbb{Z} \setminus \{0\}$, then the sequence $\left( \Phi \left( 0, \frac{1}{p}   n!   \chi_{n} \right) : n \in \omega \right)$ converges to $0$.

Let $g: \omega \to \Phi[G]$. It follows from Proposition \ref{prop_subseq} that there exist $l < 2$, $\xi \in J_1 \cup \bigcup_{n \in D} J_n$, $c \in G$, $F \in [\omega]^{< \omega}$, $p_i, q_i \in \mathbb{Z}$ with $q_i \neq 0$ for every $i \in F$, $\{M_{n}^{i} : n \in \omega\}$ such that $q_i \mid M_{n}^{i}$ for every $i \in F$ and a strictly increasing function $j: \omega \to \omega$ such that \[\Phi^{-1} \circ g \circ j(n) = l   h_{\xi}(n) + c - \sum_{i \in F} \frac{p_i}{q_i}   f(M_{n}^{i})\] for every $n \in \omega$, where $f : \omega \to G$ is given by $f(n) = (0, n!   \chi_n)$ for every $n \in \omega$.

Fix $p_{\xi} \in \omega^{*}$ containing $\{S_{\xi, \alpha} : \alpha < \mathfrak{c}\}$. According to Proposition \ref{prop_hom} (ii) and (iii), the sequence $(\phi_{\alpha}(h_{\xi}(n)) : n \in S_{\xi, \alpha})$ converges to $\phi_{\alpha}(0, \chi_{\xi})$ (respectively, $\phi_{\alpha}(y_{\xi}, 0)$) if $\xi \in L_1$ (respectively, $\xi \in \bigcup_{n \in D} J_n$) for every $\alpha < \mathfrak{c}$. Thus, if $\xi \in J_1$, then \[\phi_{\alpha}(0, \chi_{\xi}) = p_{\xi}-\lim (\phi_{\alpha}(h_{\xi}(n)) : n \in \omega)\] for every $\alpha < \mathfrak{c}$ and if $\xi \in \bigcup_{n \in D} J_n$, then \[\phi_{\alpha}(y_{\xi}, 0) = p_{\xi}-\lim (\phi_{\alpha}(h_{\xi}(n)) : n \in \omega)\] for every $\alpha < \mathfrak{c}$. It follows from Proposition \ref{prop_p-limit_product} that $\xi \in J_1$, then \[\Phi(0, \chi_{\xi}) = p_{\xi}-\lim (\Phi(h_{\xi}(n)) : n \in \omega)\] and if $\xi \in \bigcup_{n \in D} J_n$, then \[\Phi(y_{\xi}, 0) = p_{\xi}-\lim (\Phi(h_{\xi}(n)) : n \in \omega).\]

Since the sequence $\left( \Phi \left( 0, \frac{1}{q_i}   n!   \chi_{n} \right) : n \in \omega \right)$ converges to $0$ for each $i \in F$, it follows from Proposition \ref{prop_p-limit} that if $\xi \in J_1$, then \[\Phi(l   (0, \chi_{\xi}) + c) = p_{\xi}-\lim \left( \Phi \left( l   h_{\xi}(n) + c - \sum_{i \in F} \frac{p_i}{q_i}   f(M_{n}^{i}) \right) : n \in \omega \right)\] and if $\xi \in \bigcup_{n \in D} J_n$, then \[\Phi(l   (y_{\xi}, 0) + c) = p_{\xi}-\lim \left( \Phi \left( l   h_{\xi}(n) + c - \sum_{i \in F} \frac{p_i}{q_i}   f(M_{n}^{i}) \right) : n \in \omega \right).\] Consequently, $\Phi(l   (0, \chi_{\xi}) + c)$ (respectively, $\Phi(l   (y_{\xi}, 0) + c)$) is an accumulation point of $(g(n) : n \in \omega)$ if $\xi \in J_1$ (respectively, if $\xi \in \bigcup_{n \in D} J_n$).
\end{proof}

Since $\mathfrak{p} = \mathfrak{c}$ implies that an Abelian group $G$ satisfying $|G| = |G / T(G)| = \mathfrak{c}$ and $\forall n, d \in \mathbb{N} \ (d \mid n \rightarrow |d G[n]| < \omega \ \hbox{or} \ |d G[n]| = \mathfrak{c})$ admits a countably compact Hausdorff group topology (see, for instance, \cite{boero&tomita3}), the following corollary holds.

\begin{cor}[$\mathfrak{p} = \mathfrak{c}$]
If $G$ is a non-torsion Abelian group of size continuum that admits a countably compact Hausdorff group topology, then $G$ admits a countably compact Hausdorff group topology with non-trivial convergent sequences.
\end{cor}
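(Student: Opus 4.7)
My plan is to derive the Corollary as an immediate consequence of the Theorem stated just above. The Theorem produces the desired countably compact Hausdorff group topology with non-trivial convergent sequences under the structural hypotheses $|G|=|G/T(G)|=\mathfrak{c}$ together with the dichotomy $|dG[n]|<\omega$ or $|dG[n]|=\mathfrak{c}$ whenever $d\mid n$. So the only work left is to show that a non-torsion Abelian group $G$ of size $\mathfrak{c}$ admitting \emph{any} countably compact Hausdorff group topology automatically satisfies these structural conditions under the set-theoretic hypothesis $\mathfrak{p}=\mathfrak{c}$.

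For this verification I would invoke the characterization theorem from \cite{boero&tomita3}, to which the paper explicitly points just before the statement of the Corollary: under $\mathfrak{p}=\mathfrak{c}$, the existence of a countably compact Hausdorff group topology on a non-torsion Abelian group of size $\mathfrak{c}$ is equivalent to having free rank $\mathfrak{c}$ together with the divisibility dichotomy. This is the same trichotomy recalled in the Introduction, originally due to Dikranjan and Tkachenko under Martin's Axiom. Since $r(G)=|G/T(G)|$ for infinite free rank (as noted in the Basic results subsection), having $r(G)=\mathfrak{c}$ upgrades to $|G/T(G)|=\mathfrak{c}$. At that point the Theorem applies verbatim and produces a (possibly different) countably compact Hausdorff group topology on $G$ with non-trivial convergent sequences.

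Given that this deduction is essentially bookkeeping, I do not expect any real obstacle inside the Corollary itself. The substantive difficulty is inherited from the Theorem, which rests on the two Propositions \ref{prop_subseq} and \ref{prop_hom} that the paper postpones to Sections~3 and~4; there, the combinatorial work of sorting all sequences into the twelve types of $\mathcal{H}$ and then using $\mathfrak{p}=\mathfrak{c}$ to build the homomorphisms $\phi_\alpha$ coherently (so that the diagonal map $\Phi$ remains injective while the tower $(S_{\xi,\alpha})_{\alpha<\mathfrak{c}}$ retains a pseudointersection extending to an ultrafilter $p_\xi$) is where I expect the real technical challenge to live.
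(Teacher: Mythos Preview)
Your proposal is correct and follows exactly the paper's approach: invoke the characterization from \cite{boero&tomita3} to pass from the existence of a countably compact Hausdorff group topology to the structural hypotheses $|G|=|G/T(G)|=\mathfrak{c}$ and the $dG[n]$ dichotomy, and then apply the preceding Theorem. One minor sharpening: the implication you actually need (admitting a countably compact Hausdorff group topology $\Rightarrow$ structural conditions) is a ZFC fact, so $\mathfrak{p}=\mathfrak{c}$ enters only through the Theorem itself.
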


\section{Proof of Proposition \ref{prop_subseq}}

\begin{myprop1}
If $g: \omega \to G \subset (\mathbb{Q} / \mathbb{Z})^{(P_0 \times \omega)} \oplus \mathbb{Q}^{(P_1)}$, then there exist $l < 2$, $h \in \mathcal{H}$, $c \in G$, $F \in [\omega]^{< \omega}$, $p_i, q_i \in \mathbb{Z}$ with $q_i \neq 0$ for every $i \in F$, $\{M_{n}^{i} : n \in \omega\}$ such that $q_i \mid M_{n}^{i}$ for every $i \in F$ and $j:\omega\rightarrow \omega$ strictly increasing such that \[g\circ j(n)= l   h(n) + c - \sum_{i \in F} \frac{p_i}{q_i}   f(M_{n}^{i})\] for every $n \in \omega$, where $f : \omega \to G$ is given by $f(n) = (0, n!   \chi_n)$ for every $n \in \omega$.
\end{myprop1}

\begin{proof}
Write $g=g_0+g_{1, 0}+g_{1, 1}$ in the only way such that $\supp g_0\subset P_0\times \omega$, $\supp g_{1, 0}\subset \omega$, $\supp g_{1, 1}\subset P_1\setminus \omega$, and let $g_1=g_{1, 0}+g_{1, 1}$. There exist nine cases to consider.

In the first three cases, we assume $(\ddag)$: there exists $J_0 \in [\omega]^\omega$ such that $g_1|J_0$ is constant.

\begin{description}[style=unboxed,leftmargin=0cm]

  \item[Case 1] $(\ddag)$ and $\{q(g_0(n)): n \in J_0\}$ is unbounded. \\ By induction, define a strictly increasing sequence $(n_k : k \in \omega)$ of elements of $J_0$ such that $q(g_0(n_k)) > k$ for each $k \in \omega$. The function $j: \omega \to \omega$ defined by $j(k) = n_k$ for each $k \in \omega$ is strictly increasing and $g \circ j$ is of type 11.

  \item[Case 2] $(\ddag)$ and $\{q(g_0(n)): n \in J_0\}$ is bounded and $\bigcup_{n \in J_0} \supp g_0(n) $ is infinite. \\ Since $\bigcup_{n \in J_0} \supp g_0(n) $ is infinite, there exists $J_1 \in [J_0]^{\omega}$ such that $\supp g_0(n) \setminus \cup_{m < n, m \in J_1} \supp g_0(m) \neq \emptyset$ for every $n \in J_1$. We have that $\{g(n) : n \in J_1\}$ is an infinite subset of $G$ and so is $S = \{g(n) - g(n_0) : n \in J_1\}$, where $n_0 \in J_1$ is a fixed element. Since $\{q(g_0(n)) : n \in J_1\}$ is bounded, there exists a natural number $k > 1$ such that $kS=\{0\}$. Thus, there exist $T \subset G$ infinite and $z \in G$ such that $T + z \subset S$ and $T$ is $d$-round, for some divisor $d$ of $k$. Let $J_2 \in [J_1]^{\omega}$ such that $T + z = \{g(n) - g(n_0) : n \in J_2\}$. In other words, $T = \{g(n) - g (n_0) - z : n \in J_2\}$. $T$ is $d$-round, so $dT=\{0\}$. Therefore, there exist $J_3 \in [J_2]^{\omega}$ and $r$ a divisor of $d$ such that $\ordem(g(n) - g(n_0) - z) = r$ for every $n \in J_3$. Observe that $r \in D$, since $\{g (n) - g (n_0) - z : n \in J_3\}$ is $r$-round. Put $c = - g (n_0) - z$ and recursively define $j:\omega\rightarrow J_3$ strictly increasing such that, $\supp (g \circ j(n) - c) \setminus \cup_{m < n} \supp (g \circ j(m) - c) \neq \emptyset$ for every $n \in \omega$. Note that the sequence $(g \circ j(n) + c : n \in \omega)$ is of type 12.

  \item[Case 3]$(\ddag)$ and $\{q(g_0(n) : n \in J_0)\}$ is bounded and $\bigcup_{n \in J_0} \supp g_0(n)$ is finite. \\ There exists a strictly increasing function $j : \omega \to J_0$ such that $g \circ j(n)$ is constant.

\end{description}

 For the next three cases  there will be no extra assumptions.
\begin{description}[style=unboxed,leftmargin=0cm]

  \item[Case 4] $\bigcup_{n \in \omega} \supp g_{1,1}(n)$ is infinite. \\ By induction, define a strictly increasing sequence $(n_k : k \in \omega)$ of natural numbers such that $\supp g_{1,1}(n_{k + 1}) \setminus \cup_{l < k + 1} \supp g_{1,1}(n_l)  \neq \emptyset$. The function $j : \omega \to \omega$ defined by $j(k) = n_k$ for each $k \in \omega$ is strictly increasing and $g \circ j$ is of type 1.

  \item[Case 5] $\bigcup_{n \in \omega} \supp g_{1,1}(n) $ is finite and $\{q(g_{1,1}(n) ) : n \in \omega\}$ is unbounded. \\ By induction, define a strictly increasing sequence $\{n_k : k \in \omega\}$ of natural numbers such that $q(g_{1,1}(n_k) ) > k$ for each $k \in \omega$. The function $j : \omega \to \omega$ defined by $j(k) = n_k$ for each $k \in \omega$ is strictly increasing and $g \circ j$ is of type 2.

  \item[Case 6] $\bigcup_{n \in \omega} \supp g_{1,1}(n) $ is finite, $\{q(g_{1,1}(n) ) : n \in \omega\}$ is bounded and $\{p(g_{1, 1}(n)) : n \in \omega\}$ is unbounded. \\ By induction, define a strictly increasing sequence $(n_k : k \in \omega)$ of natural numbers such that $p(g_{1,1}(n_k)) > k$ for each $k \in \omega$. The function $j : \omega \to \omega$ defined by $j(k) = n_k$ for each $k \in \omega$ is strictly increasing and $g \circ j$ is of type 3.

\end{description}

For the last three cases, we shall consider that $\bigcup_{n \in \omega} \supp g_{1,1}(n)$ is finite and that $\{q(g_{1,1}(n)) : n \in \omega\}$ and $\{p(g_{1,1}(n)) : n \in \omega\}$ are bounded. So, there exists $J_0\in [\omega]^{\omega}$ such that $g_{1,1}|_{J_0}$ is constant.

\begin{description}[style=unboxed,leftmargin=0cm]

  \item[Case 7] $\{q(g_{1,0}(n)) : n \in J_0\}$ is unbounded. \\ By induction, define a strictly increasing sequence $(n_k : k \in \omega)$ of elements of $J_0$ such that $q(g_{1,0}(n_k)) > k$ for each $k \in \omega$. The function $j: \omega \to \omega$ defined by $j(k) = n_k$ for each $k \in \omega$ is strictly increasing and $g \circ j$ is of type 4.

  \item[Case 8] $\{q(g_{1, 0}(n)) : n \in J_0\}$ is bounded and $\{p(g_{1, 0}(n)) : n \in J_0\}$ is unbounded. \\ Let \[Q = \left\{ \frac{g(n)(M)}{M!} : M \in \supp g_{1,0}(n), n \in J_0 \right\}.\]

      \begin{description}[style=unboxed,leftmargin=0cm]

        \item[Q is infinite] There exist $J_1 \in [J_0]^{\omega}$ and $(M_n : n \in J_1)$ such that $M_n \in \supp g_{1, 0}(n)$ and \[\left( \frac{g(n)(M_n)}{M_n!} : n \in J_1 \right) \to r\] strictly monotonically for some $r \in [- \infty, + \infty]$. We can suppose that $M_n \not \in \cup_{m < n, m \in J_1} \supp g_{1, 0}(m)$ for every $n \in J_1$ or that $\{M_n : n \in J_1\}$ is a singleton. Suppose that the first case occurs. If $r = 0$, then $g \circ j$ is of type 6, where $j: \omega \to J_1$ is the order isomorphism. Analogously, if $r \in \mathbb{R} \setminus \mathbb{Q}$, then $g \circ j$ is of type 7 and if $r \in \{- \infty, + \infty\}$, then $g \circ j$ is of type 8. If $r \in \mathbb{Q} \setminus \{0\}$, let $\tilde{g} : \omega \to (\mathbb{Q} / \mathbb{Z})^{(P_0 \times \omega)} \oplus \mathbb{Q}^{(P_1)}$ be such that $\tilde{g}(n) = g (n) - r   f(M_n)$ for every $n \in J_1$. Since $(M_n : n \in J_1)$ is injective, there exists a cofinite subset $J_2$ of $J_1$ such that $\tilde{g}(n) \in G$ for every $n \in J_2$. Note that there exists $J_3 \in [J_2]^{\omega}$ such that \[\left( \frac{\tilde{g}(n)(M_n)}{M_n!} : n \in J_3 \right) \to 0\]
        strictly monotonically. Let $j : \omega \to J_3$ be an order isomorphism. Then $\tilde{g} \circ j$ is of type 6 and $g \circ j(n) = \tilde{g} \circ j(n) + r   f(M_n)$ for every $n \in \omega$. Now, suppose that there exists $M \in \omega$ such that $M_n = M$ for every $n \in J_1$. Since \[\left( \frac{g(n)(M)}{M!} : n \in J_1 \right)\] is injective and $\{q(g(n), M) : n \in J_1\}$ is bounded, there exists $J_2 \in [J_1]^{\omega}$ such that $|p(g(n), M)| > n$ for every $n \in J_2$. Hence, $g \circ j$ is of type 5, where $j : \omega \to J_2$ is the order isomorphism .

            \item[Q is finite] Either there exists a strictly growing sequence $u:\omega\rightarrow J_0$ such that $|\supp g_{1, 0}(u(n))| > n$ for every $n \in \omega$ or there exists $J_1\in [J_0]^\omega$ such that $\{|\supp g_{1, 0}(n)| : n \in J_1\}$ is a singleton. In the first case, $g \circ u$ is of type 9. Suppose that there exists $k \in \omega$ such that $|\supp g_{1, 0}(n)| = k$ for every $n \in J_1$ and write $\supp g_{1, 0}(n)  = \{M_{0}^{n}, \ldots, M_{k - 1}^{n}\}$, where $M_{i}^{n} \neq M_{i'}^{n}$ if $i \neq i'$. Since $Q$ is finite, there exist $J_2 \in [J_1]^{\omega}$ and $p_0/q_0, \ldots, p_{k - 1}/q_{k - 1} \in \mathbb{Q} \setminus \{0\}$ such that \[\frac{g (n)(M_{i}^{n})}{M_{i}^{n}!} = \frac{p_i}{q_i}\]
            
            for all $n \in J_2$ and $i < k$. By refining $J_2$ (if necessary), we can suppose that for each $i < k$, $(M_{i}^{n} : n \in \omega)$ is either constant or injective with $q_i \leq M_{i}^{n}$ for every $n \in J_2$. Let $K_0 = \{i < k : \{M_{i}^{n} : n \in J_2\} \ \hbox{is a constant sequence}\}$ and $K_1 = k \setminus K_0$. Let $\hat{g} : \omega \to (\mathbb{Q} / \mathbb{Z})^{(P_0 \times \omega)} \oplus \mathbb{Q}^{(P_1)}$ be such that \[\hat{g}(n) = g(n) - \sum_{i \in K_1} \frac{p_i}{q_i} f(M_{i}^{n})\] for every $n \in J_2$. Observe that $\hat{g}(n) \in G$ for every $n \in J_2$ and that $\hat{g}_{1, 0}|_{J_2}$ is a constant sequence. Since $\hat{g}_{1,1}(n) = g_{1, 1}(n)$ for every $n \in J_2$, we conclude that $(\hat{g}_1(n): n \in J_2)$ is constant. If $\{q(\hat{g}_{0}(n)) : n \in J_2\}$ is unbounded, we proceed as in Case 1. If $\{q(\hat{g}_0(n)) : n \in J_2\}$ is bounded and $\bigcup_{n \in J_2} \supp \hat{g}_{0}(n)$ is infinite, we proceed as in Case 2. Finally, if $\{q(\hat{g}_{0}(n) ) : n \in J_2\}$ is bounded and $\bigcup_{n \in J_2} \supp \hat{g}_0(n) $ is finite, we proceed as in Case 3.

  \item[Case 9] $\{q(g_{1,0}(n) : n \in J_0\}$ and $\{p(g_{1,0}(n)) : n \in J_0\}$ are bounded and $\bigcup_{n \in J_0} \supp g_{1,0}(n)$ is infinite. \\ By induction, define a strictly increasing sequence $(n_k : k \in \omega)$ of elements of $J_0$ such that $\supp g_{1,0}(n_{k + 1}) \setminus \cup_{l < k + 1} \supp g_{1,0}(n_l) \neq \emptyset$. The function $j: \omega \to \omega$ defined by $j(k) = n_k$ for each $k \in \omega$ is strictly increasing and $g \circ j$ is of type 10.
      \end{description}
\end{description}

Now, if cases 6 to 9 do not hold then $g_{1,1}|_{J_0}$ is constant, $\bigcup_{n \in J_0} \supp g_{1,0}(n)$ is finite and $\{q(g_{1,0}(n)) : n \in J_0\}$ and $\{p(g_{1,0}(n)) : n \in J_0\}$ are bounded. Therefore, there exists a strictly increasing function $j: \omega \to \omega$ such that $g_{1,0} \circ j$ is constant and $(\ddag)$ is satisfied.
\end{proof}

\section{Proof of Proposition \ref{prop_hom}}

This section is devoted to prove Proposition \ref{prop_hom}. As mentioned, we will concern ourselves primarily with the construction of group homomorphisms from countable subgroups of $(\mathbb{Q} / \mathbb{Z})^{(P_0 \times \omega)} \oplus \mathbb{Q}^{(P_1)}$ into $\mathbb{T}$. These countable subgroups will be of the form $(\mathbb{Q} / \mathbb{Z})^{((P_0 \cap E) \times \omega)} \oplus \mathbb{Q}^{(P_1 \cap E)}$ for some $E \in [\mathfrak{c}]^{\omega}$ and the following proposition ensures the existence of a suitable $E$ and can be proved by an easy closing off argument.

\begin{prop}\label{prop_const_E}
Let $\alpha < \mathfrak{c}$. There exists $E \in [\mathfrak{c}]^{\omega}$ such that

\begin{enumerate}[label=(\roman*)]

  \item $\supp H_{\alpha} \subset (E \times \omega) \cup E$,
  \item $\omega \subset E$,

  \item $|E \cap (J_1 \cup \bigcup_{n \in D} J_n)| = \omega$ and

  \item $\bigcup_{n \in \omega} \supp h_{\xi}(n) \subset (E \times \omega) \cup E$ for every $\xi \in E \cap (J_1 \cup \bigcup_{n \in D} J_n)$.

\end{enumerate}
\end{prop}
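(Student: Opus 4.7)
The plan is a standard closing-off construction in $\omega$ stages. Let $\pi : (P_0 \times \omega) \cup P_1 \to \mathfrak{c}$ be the projection sending $(\xi, n) \mapsto \xi$ on $P_0 \times \omega$ and $\xi \mapsto \xi$ on $P_1$; then the assertion ``$\supp f \subset (E \times \omega) \cup E$'' is equivalent to ``$\pi[\supp f] \subset E$'', which is the more convenient formulation. Our task is therefore to produce a countable $E \subset \mathfrak{c}$ with $\omega \cup \pi[\supp H_\alpha] \subset E$, with $E \cap (J_1 \cup \bigcup_{n \in D} J_n)$ infinite, and closed under the operation $\xi \mapsto \pi[\bigcup_{n \in \omega} \supp h_\xi(n)]$ for every $\xi$ lying in that intersection.

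First I would fix a countably infinite subset $\{\xi_k : k \in \omega\}$ of $J_1 \cup \bigcup_{n \in D} J_n$, which is available since, for example, $|J_1| = \mathfrak{c}$, and set
\[
E_0 \;=\; \omega \;\cup\; \pi[\supp H_\alpha] \;\cup\; \{\xi_k : k \in \omega\}.
\]
This is countable, because $\supp H_\alpha$ is finite. I would then recursively define
\[
E_{k+1} \;=\; E_k \;\cup\; \bigcup\left\{\pi\!\left[\bigcup_{n \in \omega} \supp h_\xi(n)\right] : \xi \in E_k \cap \left(J_1 \cup \bigcup_{n \in D} J_n\right)\right\}
\]
and put $E = \bigcup_{k \in \omega} E_k$. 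An induction on $k$ shows that each $E_k$ is countable: at stage $k+1$ one takes a countable union (indexed by countably many $\xi \in E_k$) of projections of countable sets $\bigcup_n \supp h_\xi(n)$, and each such projection is countable since each $\supp h_\xi(n)$ is finite. Hence $E$ is a countable union of countable sets, and belongs to $[\mathfrak{c}]^\omega$.

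Verifying the four clauses is then immediate. Conditions (i) and (ii) are built directly into $E_0$. For (iii), $\{\xi_k : k \in \omega\} \subset E_0 \cap (J_1 \cup \bigcup_{n \in D} J_n) \subset E \cap (J_1 \cup \bigcup_{n \in D} J_n)$ makes the intersection infinite, and it is at most countable because $E$ itself is. For (iv), any $\xi$ in that intersection already lies in some $E_k$, so $\pi[\bigcup_n \supp h_\xi(n)] \subset E_{k+1} \subset E$. There is no genuine obstacle here---this is precisely why the authors call it an ``easy closing off argument''; the only point to watch is that closure must be iterated rather than achieved in a single step, because new elements of $J_1 \cup \bigcup_{n \in D} J_n$ may be introduced at each stage, and each such newcomer must in turn have its support absorbed into $E$.
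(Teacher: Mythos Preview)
Your proposal is correct and is precisely the ``easy closing off argument'' that the paper alludes to without spelling out; the paper provides no proof beyond that phrase, and your recursive construction of $E = \bigcup_k E_k$ is the standard way to realize it. All four clauses are verified exactly as you say.
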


The next two lemmas will be necessary in the successor step of the induction in Lemma \ref{lem_const_hom}. A proof of Lemma \ref{lem_kronecker_T2} can be found in \cite{boero&tomita2}.

\begin{lem}\label{lem2}
Let $c, d \in \mathbb{Z} \setminus \{0\}$, $\epsilon > 0$, $a \in \mathbb{T}$ and $B = \mathbb{T}$ or $B \in \mathcal{B}$ be such that $\delta(B) \geq \epsilon$ and $|d|   \epsilon > \gcd(c, d)$. There exists $x \in \mathbb{T}$ such that $d   x = a$ and $c   x \in B$.
\end{lem}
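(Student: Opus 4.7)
The plan is to exploit the fact that the set $\{x \in \mathbb{T} : dx = a\}$ is a coset of $\langle 1/|d| + \mathbb{Z}\rangle$ of size $|d|$ in $\mathbb{T}$, and that its image under the multiplication-by-$c$ map is a coset of a cyclic subgroup of $\mathbb{T}$ whose consecutive points are spaced $\gcd(c,d)/|d|$ apart. The hypothesis $|d|\epsilon > \gcd(c,d)$ is precisely the assertion that $\epsilon$ exceeds this spacing, so the desired $x$ will then drop out of a one-line pigeonhole on $\mathbb{T}$.

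I would first dispose of the trivial case $B = \mathbb{T}$, in which any of the $|d|$ solutions of $dx = a$ works. So assume $B \in \mathcal{B}$. Since an open arc of length $\ell$ on $\mathbb{T}$ has diameter $\min(\ell, 1/2)$, the assumption $\delta(B) \geq \epsilon$ forces $\epsilon \leq 1/2$ and the length of $B$ to be at least $\epsilon$. Next I would fix one solution $x_0$ of $dx = a$, so that the full solution set is $\{x_0 + k/|d| + \mathbb{Z} : 0 \leq k < |d|\}$. Applying $x \mapsto cx$ yields the coset $cx_0 + H$ in $\mathbb{T}$, where $H = \langle c/|d| + \mathbb{Z}\rangle$. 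A direct computation shows that $c/|d| + \mathbb{Z}$ has order $|d|/\gcd(c,d)$ in $\mathbb{T}$, so $H$ is a cyclic subgroup whose $|d|/\gcd(c,d)$ elements are equally spaced $\gcd(c,d)/|d|$ apart on $\mathbb{T}$.

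Finally, $H$ (and hence the coset $cx_0+H$) partitions $\mathbb{T}$ into $|d|/\gcd(c,d)$ arcs of equal length $\gcd(c,d)/|d|$. The hypothesis $|d|\epsilon > \gcd(c,d)$ gives $\epsilon > \gcd(c,d)/|d|$, so the open arc $B$, which has length at least $\epsilon$, cannot lie within a single such gap and therefore must contain a point of $cx_0 + H$. Selecting the corresponding $x$ among the solutions of $dx = a$ produces the desired element. I do not anticipate any real obstacle; the only subtlety worth verifying carefully is the identification of the order of $c/|d| + \mathbb{Z}$ in $\mathbb{T}$ and the resulting uniform spacing of $H$, after which the pigeonhole is immediate.
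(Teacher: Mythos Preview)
Your proposal is correct and takes essentially the same approach as the paper: both identify the image under multiplication by $c$ of the fiber $\{x : dx = a\}$ as a coset of the cyclic subgroup $\langle 1/d' + \mathbb{Z}\rangle$ (with $d' = d/\gcd(c,d)$), which is $\gcd(c,d)/|d|$-dense in $\mathbb{T}$, and then use that any arc of length exceeding this spacing must meet it. The only cosmetic difference is that the paper writes down the solution explicitly via B\'ezout's identity ($uc' + vd' = l$, yielding $x = (\tilde a + u)/d + \mathbb{Z}$), whereas you phrase the same step as a pigeonhole on the equally spaced coset.
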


\begin{proof}
	let $e=\gcd(c, d)$ and let $c', d' \in \mathbb Z$ be such that $c=ec'$ and $d=ed'$. Notice that $\epsilon>\frac{1}{|d'|}$. If $B=\mathbb T$ the proof is trivial, so suppose $\delta(B)\geq \epsilon$ and $|d|\epsilon>e$.
	
	Let $a=\tilde a+\mathbb Z$ for some $\tilde a \in \mathbb R$. Since $\epsilon>\frac{1}{|d'|}$, there exists $l \in \mathbb Z$ such that:
	
	$$\frac{c\tilde a}{d}+\frac{l}{d'} +\mathbb Z \in B.$$
	
	Since $\gcd(c', d')=1$, there exists $u, v \in \mathbb Z$ such that $uc'+vd'=l$. Now, since $\frac{c}{d}=\frac{c'}{d'}$, it follows that:
	
		$$\frac{c\tilde a}{d}+\frac{uc}{d} +\mathbb Z \in B.$$
		
	So let $x=\frac {\tilde a+u}{d}+\mathbb Z$ and we are done.
\end{proof}

Kronecker's Lemma says that if $\{1, \xi_0, \ldots, \xi_{k - 1}\}$ is a linearly independent subset of the vector space $\mathbb{R}$ over the
field $\mathbb{Q}$, then $\{(\xi_0 n, \ldots, \xi_{k - 1} n) + \mathbb{Z}^{k} : n \in \mathbb{Z}\}$ is a dense subset of the product of $k$ tori (see, for instance, \cite{brocker&tomdieck}). If $\xi \in \mathbb{R} \setminus \mathbb{Q}$, then $\{(x, \xi x) + \mathbb{Z}^{2} : x \in \mathbb{R}\}$ is a dense subset of the torus $\mathbb{T}^{2} = \mathbb{R}^{2} / \mathbb{Z}^{2}$. Thus, for each $\epsilon > 0$, there exists $l(\epsilon, \xi) > 0$ such that if $I \subset \mathbb{R}$ is an interval of length greater than $l(\epsilon, \xi)$, then $\{(x, \xi x) + \mathbb{Z}^{2} : x \in I\}$ is $\epsilon$-dense in $\mathbb{T}^{2}$.

\begin{lem}\label{lem_kronecker_T2}
Let $c_1, c_2 \in \mathbb{Z} \setminus \{0\}$, $\epsilon > 0$ and $B_1, B_2 \in \mathcal{B}$ be such that $\delta(B_1) \geq \epsilon$ and $\delta(B_2) \geq \epsilon$. Suppose that there exist $\xi \in \mathbb{R} \setminus \mathbb{Q}$ and $a > l(\epsilon / 4, \xi)$ such that $\left| \frac{c_1}{c_2} a - \xi a \right| < \frac{\epsilon}{4}$. There exists $x \in \mathbb{T}$ such that $c_1   x \in B_1$ and $c_2   x \in B_2$.
\end{lem}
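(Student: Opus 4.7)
The plan is to reduce finding $x \in \mathbb{T}$ to finding a real number $y$ on the interval $I = [0, a]$ and then take $x = \frac{y}{c_2} + \mathbb{Z}$; since $c_2 \neq 0$, this produces $c_2 \cdot x = y + \mathbb{Z}$ and $c_1 \cdot x = \frac{c_1}{c_2} y + \mathbb{Z}$, so the task becomes choosing $y \in \mathbb{R}$ with $y + \mathbb{Z} \in B_2$ and $\frac{c_1}{c_2} y + \mathbb{Z} \in B_1$. A direct application of Kronecker's theorem fails because the slope $\frac{c_1}{c_2}$ is rational, but the hypothesis provides an irrational $\xi$ within $\frac{\epsilon}{4a}$ of $\frac{c_1}{c_2}$, so the idea is to invoke Kronecker's density along the irrational slope $\xi$ and then transfer the conclusion to $\frac{c_1}{c_2}$ using the linear approximation.

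More precisely, let $b_1, b_2 \in \mathbb{T}$ denote the centers of the arcs $B_1, B_2$; since $\delta(B_i) \geq \epsilon$, the open $\delta$-ball of radius $\frac{\epsilon}{2}$ around $b_i$ lies inside $B_i$. Because $|I| = a > l(\epsilon/4, \xi)$, the set $\{(y + \mathbb{Z},\, \xi y + \mathbb{Z}) : y \in I\}$ is $\epsilon/4$-dense in $\mathbb{T}^2$, so there exists $y \in I$ with $\delta(y + \mathbb{Z}, b_2) < \frac{\epsilon}{4}$ and $\delta(\xi y + \mathbb{Z}, b_1) < \frac{\epsilon}{4}$. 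The first bound already gives $c_2 \cdot x = y + \mathbb{Z} \in B_2$. For the second, the hypothesis $\lvert \frac{c_1}{c_2} a - \xi a \rvert < \frac{\epsilon}{4}$ rearranges to $\lvert \frac{c_1}{c_2} - \xi \rvert < \frac{\epsilon}{4a}$, and hence for every $y \in [0, a]$ one has $\lvert \frac{c_1}{c_2} y - \xi y \rvert < \frac{\epsilon}{4}$; the triangle inequality in $\delta$ then yields $\delta\bigl(\tfrac{c_1}{c_2} y + \mathbb{Z},\, b_1\bigr) < \frac{\epsilon}{2}$, which places $c_1 \cdot x \in B_1$.

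The main point to coordinate is that the length $a$ of the search interval must simultaneously be \emph{large} enough for Kronecker's theorem to apply (lower bound $l(\epsilon/4, \xi)$) and \emph{small} enough that the linear error $\lvert \frac{c_1}{c_2} y - \xi y \rvert = y \cdot \lvert \frac{c_1}{c_2} - \xi \rvert$ on $[0, a]$ stays below $\frac{\epsilon}{4}$ (upper bound $a$, because the slope error is only controlled at the point $a$). The hypothesis is tailored precisely so that both requirements are met by the same number $a$; once the $\epsilon$-budget is split into two quarters for density and two quarters for the linear correction, no further difficulty arises and the argument is a direct calculation.
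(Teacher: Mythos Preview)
Your argument is correct. The paper does not actually prove this lemma---it defers to \cite{boero&tomita2}---but the paragraph immediately preceding the statement sets up exactly the tool you use (the quantitative form of Kronecker density via $l(\epsilon,\xi)$), and your proof is the intended one: parametrize $x=\tfrac{y}{c_2}+\mathbb{Z}$, apply $\epsilon/4$-density of $\{(y,\xi y)+\mathbb{Z}^2:y\in[0,a]\}$ to hit the centers of $B_2$ and $B_1$, and use $\lvert \tfrac{c_1}{c_2}-\xi\rvert<\tfrac{\epsilon}{4a}$ together with $0\le y\le a$ to bound the correction $\lvert \tfrac{c_1}{c_2}y-\xi y\rvert<\tfrac{\epsilon}{4}$. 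The $\epsilon$-budget splits as $\tfrac{\epsilon}{4}+\tfrac{\epsilon}{4}<\tfrac{\epsilon}{2}\le\tfrac{\delta(B_1)}{2}$, so nothing is lost.
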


\begin{lem}\label{lem_const_hom}
	Let $\alpha < \mathfrak{c}$ and $E \in [\mathfrak{c}]^{\omega}$ be as in Proposition \ref{prop_const_E}. For each $\xi \in E \cap (J_1 \cup \bigcup_{n \in D} J_n)$, let $R_{\xi} \in [\omega]^{\omega}$. There exists a group homomorphism  $\phi_{\alpha, E} : (\mathbb{Q} / \mathbb{Z})^{((P_0 \cap E) \times \omega)} \oplus \mathbb{Q}^{(P_1 \cap E)} \to \mathbb{T}$ satisfying the following conditions:
	
	\begin{enumerate}[label=(\roman*)]
		
		\item $\phi_{\alpha, E}(H_{\alpha}) \neq 0 + \mathbb{Z}$;
		
		\item for each $\xi \in E \cap J_1$, there exists $S_{\xi} \in [R_{\xi}]^{\omega}$ such that the sequence $(\phi_{\alpha, E}(h_{\xi}(n)) : n \in S_{\xi})$ converges to $\phi_{\alpha, E}(0, \chi_{\xi})$;
		
		\item for each $\xi \in E \cap \bigcup_{n \in D} J_n$, there exists $S_{\xi} \in [R_{\xi}]^{\omega}$ such that the sequence $(\phi_{\alpha, E}(h_{\xi}(n)) : n \in S_{\xi})$ converges to $\phi_{\alpha, E}(y_{\xi}, 0)$;
		
		\item for each $p \in \mathbb{Z} \setminus \{0\}$, the sequence $\left( \phi_{\alpha, E} \left( 0, \frac{1}{p}   n!   \chi_{n} \right) : n \in \omega \right)$ converges to $0 + \mathbb{Z}$.
		
	\end{enumerate}
\end{lem}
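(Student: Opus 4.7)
The plan is to construct $\phi_{\alpha,E}$ by a countable recursion, simultaneously defining the sets $S_\xi$, along an ascending chain $G_0 \subseteq G_1 \subseteq \cdots$ of finitely generated subgroups of $(\mathbb{Q}/\mathbb{Z})^{((P_0 \cap E) \times \omega)} \oplus \mathbb{Q}^{(P_1 \cap E)}$. I will fix a generating set $(\mathfrak{g}_k)_{k\in\omega}$ of the countable domain, an enumeration $(\xi_k)_{k\in\omega}$ of $E \cap (J_1 \cup \bigcup_{n \in D} J_n)$, and an enumeration $(p_k)_{k\in\omega}$ of $\mathbb{Z} \setminus \{0\}$. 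At stage $k$ I will maintain a partial homomorphism $\phi_k : G_k \to \mathbb{T}$ with $\mathfrak{g}_k \in G_k$, finite sets $S_{\xi_j}^k \subseteq R_{\xi_j}$ for each $j \le k$, and finite sets $T_j^k \subseteq \omega$ for each $j \le k$, ensuring that for every $m \in S_{\xi_j}^k$ the value $\phi_k(h_{\xi_j}(m))$ is within $1/k$ of the appropriate target (either $\phi_k(0,\chi_{\xi_j})$ or $\phi_k(y_{\xi_j},0)$), and that for every $n \in T_j^k$, $\phi_k\bigl(0, \tfrac{1}{p_j}\, n!\, \chi_n\bigr)$ lies within $1/k$ of $0+\mathbb{Z}$. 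Taking unions over $k$ and extending by divisibility of $\mathbb{T}$ to the full countable group will yield $\phi_{\alpha,E}$ together with $S_{\xi_j}=\bigcup_k S_{\xi_j}^k$.

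At the successor step, for each $j \le k+1$ I will choose a new index $m \in R_{\xi_j}$ larger than everything previously picked and commit additional values so that $\phi_{k+1}(h_{\xi_j}(m))$ lies in an arc of diameter at most $1/(k+1)$ around the target. The argument splits according to the type of $h_{\xi_j}$ from Definition \ref{def_tipos}. Types 1 and 12, as well as the fresh-support portions of types 4, 6, 9, and 10, directly provide a coordinate in $\supp h_{\xi_j}(m)$ outside the finite domain of $\phi_k$, on which I can freely solve a single equation in the divisible group $\mathbb{T}$. Types 2, 3, 5, 8, and 11 feature unbounded coefficients (denominators for types 2, 4, 11; numerators for types 3, 5; values of $g(n)(M_n)/M_n!$ tending to $\pm\infty$ for type 8), all of which I will handle by Lemma \ref{lem2}: its hypothesis $|d|\epsilon > \gcd(c,d)$ is met because the relevant integer can be made arbitrarily large compared to the fixed arc length $\epsilon \ge 1/(k+1)$. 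The delicate case is type 7, where $g(n)(M_n)/M_n!$ converges monotonically to an irrational $\xi$; for $m$ sufficiently large I will apply Lemma \ref{lem_kronecker_T2} with $c_1 = M_m!$ and $c_2 = g(m)(M_m)$ to place $\phi(0,\chi_{M_m})$ and the desired multiple simultaneously inside small arcs.

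Condition (i) will be arranged at the initial stage: pick $\mu \in \supp H_\alpha$ and use divisibility of $\mathbb{T}$ to set $\phi_0$ at that coordinate so that $\phi_0(H_\alpha) \neq 0 + \mathbb{Z}$. Condition (iv) will be maintained throughout by ensuring that whenever a generator $(0,\chi_n)$ for a fresh $n \in \omega$ is first introduced and not otherwise constrained, I assign it a value of the form $\frac{a_n}{(n!)^2} + \mathbb{Z}$ with $|a_n|$ bounded, so that $\phi_{\alpha,E}\bigl(0, \tfrac{1}{p}\, n!\, \chi_n\bigr) = \frac{a_n}{p\, n!}+\mathbb{Z} \to 0+\mathbb{Z}$ for each fixed $p$. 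The principal obstacle is the joint management of all these commitments: when several sequences $h_{\xi_j}$ share coordinates in $\omega \subset P_1$, I must pick the new index $m$ large enough that the fresh support point of each $h_{\xi_j}(m)$ avoids the finite set of previously committed coordinates, which is possible by the definition of each applicable type; the case analysis for type 7, where this flexibility must be combined with a Kronecker-style approximation via Lemma \ref{lem_kronecker_T2}, is the technical heart of the proof.
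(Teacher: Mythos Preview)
Your overall architecture---a countable recursion with a case split on the twelve types, Lemma~\ref{lem2} for the large-coefficient cases, Lemma~\ref{lem_kronecker_T2} for type~7---matches the paper. The gap is in how you record partial information on the torsion-free summand $\mathbb{Q}^{(P_1\cap E)}$.

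You commit exact values via a partial homomorphism $\phi_k:G_k\to\mathbb{T}$. The paper does this only on the torsion part $(\mathbb{Q}/\mathbb{Z})^{((P_0\cap E)\times\omega)}$; on the $P_1$ side it instead maintains a map $\psi_m:G_m^1\to\mathcal{B}$ into open \emph{arcs} of shrinking diameter, with a nesting condition $d(h^1_{\theta_m}(b_m))\,p_m\,\overline{\psi_{m+1}(\xi)}\subset\psi_m(\xi)$, and recovers $\phi_{\alpha,E}(\chi_\xi)$ only at the end as the unique point of $\bigcap_m s_m\psi_m(\xi)$. This is not cosmetic. For types 2, 4 and 11 your plan does work, since an unbounded denominator makes $\frac{1}{q}\chi_\mu$ (respectively $\Lambda_{(\mu,l),q}$) a genuinely new generator to which Lemma~\ref{lem2} may be applied. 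But for types 3 and 5 the denominator at the key coordinate $\mu$ is \emph{bounded} while the numerator grows, and no fresh support point is promised by the definition. Once your $G_k$ contains $\frac{1}{q}\chi_\mu$ for every $q$ up to that bound, the contribution of $\mu$ to $\phi_k(h_\xi(b_m))$ is completely determined for all future $b_m$, and Lemma~\ref{lem2} is useless: it \emph{produces} an $x$, but your $x$ is already fixed. The paper's arc mechanism is exactly what circumvents this: in its Cases~3 and~5 one applies Lemma~\ref{lem2} to locate a new center $z_\mu$ \emph{inside the current arc} $\psi_m(\mu)$ (this is the condition $c z_\mu\in B$) with $d z_\mu$ equal to the target, and then recenters $\psi_{m+1}(\mu)$ at $z_\mu$. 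Your scheme for (iv), committing the rational value $a_n/(n!)^2+\mathbb{Z}$ at each $n\in\omega$, makes the type-5 situation strictly worse: for the fixed $M\in\omega$ of such a sequence the multiples $p\cdot\phi_k(\frac{1}{q}\chi_M)$ then range over a finite subgroup of $\mathbb{T}$ and can never be steered into an arbitrary target arc. The paper's replacement for your (iv) scheme is an arc condition imposed the moment $n$ first enters $G_{m+1}^1$ (their condition~(11)), which is compatible with all later recentering.
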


\begin{proof}
	Denote by $\mathbb{P}$ the set of all prime numbers. Let $\{p_n : n \in \omega\}$ be an enumeration of $\mathbb{P}$ such that $|\{n \in \omega : p = p_n\}| = \omega$ for every $p \in \mathbb{P}$. We may write $H_\alpha=H_0+H_{1, 0}+H_{1, 1}$ in the only way such that $\supp H_0\subset P_0\times \omega$, $\supp H_{1, 0}\subset \omega$, $\supp H_{1, 1}\subset P_1\setminus \omega$, and let $H_1=H_{1, 0}+H_{1, 1}$.
	
	Let $\{\theta_n : n \in \omega\}$ be an enumeration of $E \cap (J_1 \cup \bigcup_{n \in D} J_n)$ such that $|\{n \in \omega : \xi = \theta_n\}| = \omega$ for every $\xi \in E \cap (J_1 \cup \bigcup_{n \in D} J_n)$. Let also $\{e_{n}^{0} : n \in \omega\}$ be an enumeration of $(P_0 \cap E) \times \omega$ and $\{e_{n}^{1} : n \in \omega\}$ be an enumeration of $P_1 \cap E$.
	
	 For each $m\in\omega$, let \[A_m =  \begin{cases}
	\emptyset & \text{if }  \theta_m \in J_1 \\
	\supp y_{\theta_m} & \text{if }  \theta_m \in \bigcup_{n \in D} J_n.
	\end{cases} \]
	
	Let $\Lambda_{(\xi, n), k} : (P_0 \cap E) \times \omega \to \mathbb{T}$ be given by \[\Lambda_{(\xi, n), k}(\mu, l) = \left\{ \begin{array}{lll}
	1 / k + \mathbb{Z} & \hbox{if} & (\mu, l) = (\xi, n) \\
	0 + \mathbb{Z} & \hbox{if} & (\mu, l) \neq (\xi, n)
	\end{array}
	\right.\] for all $(\xi, n) \in (P_0 \cap E) \times \omega$ and $k \in \mathbb{Z} \setminus \{0\}$.
	
	Let $c_m$, for each $m\in\omega$, be given by \[c_{0} = \left\{ \begin{array}{lll}
	d(H_{0}) & \hbox{if} & \theta_{0} \in L_1 \\
	d(H_{0})   d(y_{\theta_{0}}, 0) & \hbox{if} & \theta_{0} \in \bigcup_{n \in D} L_n,
	\end{array}
	\right.\] \[c_{m + 1} = \left\{ \begin{array}{lll}
	c_m & \hbox{if} & \theta_{m + 1} \in L_1 \\
	c_m   d(y_{\theta_{m + 1}}, 0) & \hbox{if} & \theta_{m + 1} \in \bigcup_{n \in D} L_n.
	\end{array}
	\right.\] 
	
	Set $G_{0}^{0} = \supp H_0 \cup \{e_{0}^{0}\} \cup A_0$ and let $\phi_{0} : \langle \{ \Lambda_{(\theta, m), c_0} : (\theta, m) \in G_{0}^{0}\} \rangle \to \mathbb{T}$ be a group homomorphism such that $\phi_{0}(H_{0}) \neq 0 + \mathbb{Z}$, if $H_{0} \neq 0$.

	Set also $G_{0}^{1} = \supp H_1 \cup \{e_{0}^{1}\} \cup B_0$, where \[B_m = \left\{ \begin{array}{lll}
	\{\theta_m\} & \hbox{if} & \theta_m \in J_1 \\
	\emptyset & \hbox{if} & \theta_m \in \bigcup_{n \in D} J_n
	\end{array} \right.
	\] for every $m \in \omega$ and let $\psi_{0} : G_{0}^{1} \to \mathcal{B}$ be such that
	
	\begin{itemize}
		
		\item $\delta(\psi_{0}(\xi)) = r_0 / d(H_{1})$ for every $\xi \in G_{0}^{1}$, where $r_0 = 1 / (4   \sum_{\xi \in P_1} |a(H_{\alpha}, \xi)|)$ and
		
		\item $0 + \mathbb{Z} \not \in \phi_{0}(H_0) + \sum_{\xi \in P_1} a(H_{1}, \xi)   \psi_{0}(\xi)$, if $H_1 \neq 0$.
		
	\end{itemize}
	
	This concludes the first step of the induction. Now, we will start the successor stage. Fix $m \in \omega$ and suppose we have defined $s_0=d(H_1)$, $b_{-1} = 0$, $b_{m - 1} \in R_{\theta_{m - 1}}$ (if $m \geq 1$), $r_m > 0$, $s_m>0$, $G_{m}^{0} \in [P_0 \times \omega]^{< \omega}$, $G_{m}^{1} \in [P_1]^{< \omega}$, $\phi_m : \langle \{\Lambda_{(\xi, n), c_m   \prod_{k < m} d(h_{\theta_k}^0(b_k))} : (\xi, n) \in G_{m}^{0}\} \rangle \to \mathbb{T}$ and $\psi_m : G_{m}^{1} \to \mathcal{B}$.
	
	\begin{claim}
		There exists $b_m \in R_{\theta_m}$, $r_{m + 1} > 0$, $s_{m+1}>0$ $G_{m + 1}^{0} \in [P_0 \times \omega]^{< \omega}$, $G_{m + 1}^{1} \in [P_1]^{< \omega}$, $\phi_{m + 1} : \langle \{\Lambda_{(\xi, n), c_{m + 1}   \prod_{k < m + 1} d(h_{\theta_k}^0(b_k))} : (\xi, n) \in G_{m + 1}^{0}\} \rangle \to \mathbb{T}$ and $\psi_{m + 1} : G_{m}^{1} \to \mathcal{B}$ satisfying the following conditions:
		
		\begin{enumerate}
			
			\item $b_m > b_{m - 1}$;

			\item $G_{m + 1}^{0} = G_{m}^{0} \cup \supp h_{\theta_m}^0(b_m) \cup \{e_{m + 1}^{0}\} \cup A_{m + 1}$;
			
			\item $G_{m + 1}^{1} = G_{m}^{1} \cup \supp h_{\theta_m}^1(b_m) \cup \{e_{m + 1}^{1}\} \cup B_{m + 1}$;
			
			\item $r_{m + 1} = \frac{r_m}{4   \sum_{\xi \in P_1} |a(h_{\theta_{m}}(b_m), \xi)|(\max\{((G_{m+1}^1\setminus G_{m}^1)\cap \omega) \cup \{0\}\}!)}$;
			
			\item $s_{m+1}=s_m d(h_{\theta_m}^1(b_m))  p_m$;
			
			\item $d(h_{\theta_m}^1(b_m))  p_m  \overline{\psi_{m + 1}(\xi)} \subset \psi_{m}(\xi)$ for every $\xi \in G_{m}^{1}$;
			
			\item $\delta(\psi_{m + 1}(\xi)) = r_{m + 1} / s_{m+1}$ for every $\xi \in G_{m + 1}^{1}$;
			
			\item $s_m  \psi_{m}(\theta_m) \cap (\phi_{m + 1}(h_{\theta_m}^0(b_m)) + \sum_{\xi \in P_1} a(h_{\theta_m}(b_m), \xi) s_m p_m \psi_{m + 1}(\xi)) \neq \emptyset$ for every $\theta_m \in J_1$;
			
			\item $\phi_{m + 1} \supset \phi_{m}$;
			
			\item $\phi_{m + 1}(h_{\theta_m}^0(b_m)) = \sum_{(\xi, n) \in \supp y_{\theta_{m}} \cap G_{m}^{0}} a(y_{\theta_{m}}, (\xi, n))  \phi_{m}(\Lambda_{(\xi, n), d((y_{\theta_{m}},0)})$, for every $\theta_m \in \bigcup_{n \in D} J_n$;
			
			\item if $n \in (G_{m + 1}^{1} \setminus G_{m}^{1}) \cap \omega$, then $p_m d(h_{\theta_m}^1(b_m))n! \psi_{m+1}(n)$ is cointained in the arc $(-r_m / s_m, r_m / s_m)+\mathbb Z$.
			
		\end{enumerate}
	\end{claim}
	
	\begin{proof}[Proof of the claim]
		Fix $b_m \in R_{\theta_{m}}$ such that $b_m > b_{m - 1}$ and
		
		\begin{itemize}
			
			\item $\supp h_{\theta_{m}}^{1, 1}(b_m) \setminus G_{m}^{1} \neq \emptyset$, if $h_{\theta_{m}}$ is of type 1;
			
			\item $q(h_{\theta_{m}}^{1, 1}(b_m)) r_m > 1$, if $h_{\theta_{m}}$ is of type 2;
			
			\item $|p(h_{\theta_{m}}(b_m), \mu)|  r_m > 4q(h_{\theta_{m}}(b_m), \mu)  p_m$ for some $\mu \in \supp h_{\theta_{m}}^{1, 1}(b_m)$, if $h_{\theta_{m}}$ is of type 3;
			
			\item $q(h_{\theta_{m}}^{1, 0}(b_m))  r_m > s_m$, if $h_{\theta_{m}}$ is of type 4;
			
			\item $|p(h_{\theta_{m}}(b_m), n)|  r_m > 8 n!  q(h_{\theta_{m}}(b_m), n)  p_m$ for some $n \in \supp h_{\theta_{m}}(b_m)$, if $h_{\theta_{m}}$ is of type 5;
			
			\item $n!  p_m  r_m > |h_{\theta_{m}}(b_m)(n)| s_m$ for some $n \in \supp h_{\theta_{m}}^{1, 0}(b_m) \setminus G_{m}^{1}$ if $h_{\theta_{m}}$ is of type 6;
			
			\item $\left| \frac{h_{\theta_m}(b_m)(n)}{n!}  a - t  a \right| < \frac{r_m  p_m}{32 s_m^{2}}$ for some $n \in \supp h_{\theta_{m}}^{1, 0}(b_m) \setminus G_{m}^{1}$, if $h_{\theta_{m}}$ is of type 7, where $a > l(r_m / (32 s_m), t)$ is previously fixed and $t \in \mathbb{R} \setminus \mathbb{Q}$ witness that $h_{\theta_{m}}$ is of type 7.
			
			\item $|h_{\theta_{m}}(b_m)(n)|  r_m > 8  p_m  n!$ for some $n \in \supp h_{\theta_{m}}^{1, 0}(b_m) \setminus G_{m}^{1}$, if $h_{\theta_{m}}$ is of type 8;
			
			\item $|\supp h_{\theta_{m}}^{1, 0}(b_m) \setminus G_{m}^{1}|  r_m  \min \left\{ \frac{h_{\theta_{m}}(n)(M)}{M!} : M \in\supp h_{\theta_{m}}(n), n \in \omega \right\} > 4$, if $h_{\theta_{m}}$ is of type 9;
			
			\item $n!  r_m > |h_{\theta_{m}}(b_m)(n)| s_m$ for some $n \in \supp h_{\theta_{m}}^{1, 0}(b_m) \setminus G_{m}^{1}$, if $h_{\theta_{m}}$ is of type 10;
			
			\item $q(h_{\theta_{m}}^0(b_m))  r_m >   c_{m + 1}  \prod_{k < m} d(h_{\theta_k}^0(b_k))$, if $h_{\theta_{m}}$ is of type 11;
			
			\item $\supp h_{\theta_{m}}(b_m) \setminus G_{m}^{0} \neq \emptyset$ and $\ordem(h_{\theta_{m}}(b_m) \upharpoonright \supp h_{\theta_{m}}(b_m) \setminus G_{m}^{0}) = \ordem(h_{\theta_{m}}(b_m))$, if $h_{\theta_{m}}$ is of type 12.
			
		\end{itemize}
	
		The type 12 case requires some explanation. Since $h_{\theta_m}$ is of type 12, there exists $k \in D$ such that $\{h_{\theta_m}(b): b \in \omega\}$ is a $k$-round subset of pairwise distict elements of $G$. The set $X=\{b>b_{m-1}: \supp h_{\theta_m}(b)\setminus G_m^0\neq \emptyset\}$ is infinite. Suppose by contradiction that for every $b \in X$, $\ordem(h_{\theta_{m}}(b) \upharpoonright \supp h_{\theta_{m}}(b) \setminus G_{m}^{0})$ is a proper divisor of $k$. Since the set of the proper divisors of  the number $k$ is finite, there exists a proper divisor $d$ of $k$ such that $X'=\{b \in X: \ordem(h_{\theta_{m}}(b) \upharpoonright \supp h_{\theta_{m}}(b) \setminus G_{m}^{0})=d\}$ is infinite. For each element $b$ of $X'$, $\supp(d  h_{\theta_m}(b))\subset G^0_m$. Since $k\{h_{\theta_m}(b): b \in X'\}=\{0\}$, it follows that $\varphi_d|_{\{h_{\theta_m}(b): b \in \omega\}}$ is not finite-to-one.
		
		If $\xi \in G_{m + 1}^{1} \setminus \supp h_{\theta_m}(b_m)$, let $\psi_{m + 1}(\xi)$ be an element of $\mathcal{B}$ satisfying (6), (7) and (11). If $h_{\theta_m}$ is of type $i \in \{1, \ldots, 10\}$, extend $\phi_{m}$ to $\phi_{m + 1}$ using the divisibility of $\mathbb{T}$. It remains to define $\phi_{m + 1}$ when $h_{\theta_m}$ is of type $i \in \{11, 12\}$ and $\psi_{m + 1}(\xi)$ for each $\xi \in \supp h_{\theta_m}^1(b_m)$.
		
		\begin{description}[style=unboxed,leftmargin=0cm]
			
			\item[Case 1] $h_{\theta_m}$ is of type 1. \\ Fix $\mu \in \supp h_{\theta_{m}}^{1, 1}(b_m) \setminus G_{m}^{1}$.
			
			If $\xi \in \supp h_{\theta_{m}}^1(b_m)  \cap G_{m}^{1}$, let $z_{\xi} \in \mathbb{T}$ be such that $d(h_{\theta_{m}}^1(b_m) )  p_m  z_{\xi}$ is the middle point of $\psi_{m}(\xi)$.
			
			If $\xi \in (\supp h_{\theta_{m}}^1(b_m)  \setminus G_{m}^{1}) \cap \omega$, set $z_{\xi} = 0 + \mathbb{Z}$.
			
			If $\xi \in [\supp h_{\theta_{m}}^1(b_m)   \setminus (G_{m}^{1} \cup \omega)] \setminus \{\mu\}$, let $z_{\xi}$ be any element of $\mathbb{T}$.
			
			Let $d = a(h_{\theta_{m}}(b_m), \mu)  s_m p_m$,  and $a$ be the middle point of $s_m\psi_{m}(\theta_{m})  - \sum_{\xi \in P_1 \setminus \{\mu\}} a(h_{\theta_m}(b_m), \xi)s_m p_m  z_{\xi}- \phi_{m + 1}(h_{\theta_m}^0(b_m)).$ Since $\mathbb T$ is divisible, let $z_{\mu} \in \mathbb{T}$ be such that $d  z_{\mu} = a$.
			
			For each $\xi \in \supp h_{\theta_{m}}^1(b_m) $, let $\psi_{m + 1}(\xi) \in \mathcal{B}$ be the open arc of $\mathbb{T}$ centered at $z_{\xi}$ satisfying (7).
			
			\item[Case 2] $h_{\theta_m}$ is of type 2. \\ Fix $\mu \in \supp h_{\theta_{m}}^{1, 1}(b_m)$ such that $q(h_{\theta_{m}}(b_m), \mu)  r_m > 1$. If $\mu\notin G^1_m$ we proceed as in the previous case. So suppose that $\mu \in G_{m}^{1}$.
			
			If $\xi \in (\supp h_{\theta_{m}}^1(b_m)  \cap G_{m}^{1}) \setminus \{\mu\}$, let $z_{\xi} \in \mathbb{T}$ be such that $d(h_{\theta_{m}}^1(b_m) )   p_m   z_{\xi}$ is the middle point of $\psi_{m}(\xi)$.
			
			If $\xi \in (\supp h_{\theta_{m}}^1(b_m)  \setminus G_{m}^{1}) \cap \omega$, set $z_{\xi} = 0 + \mathbb{Z}$.
			
			If $\xi \in \supp h_{\theta_{m}}^1(b_m)   \setminus (G_{m}^{1} \cup \omega)$, let $z_{\xi}$ be any element of $\mathbb{T}$.
			
			Apply Lemma \ref{lem2} to $c = p(h_{\theta_m}(b_m),\mu)$, $d = q(h_{\theta_{m}}(b_m), \mu)$, $\epsilon = r_m$, $a = \ \hbox{the middle point of} \ \psi_{m}(\mu)$ and $B = s_m \psi_{m}(\theta_{m}) - \phi_{m + 1}(h_{\theta_m}^0(b_m)) - \sum_{\xi \in P_1 \setminus \{\mu\}} a(h_{\theta_m}(b_m), \xi)  s_m p_m z_{\xi}$ in order to obtain $x \in \mathbb{T}$ such that $d  x = a$ and $c x \in B$. Since $\mathbb{T}$ is divisible, take $z_\mu\in\mathbb{T}$ such that $\frac{d(h_{\theta_m}(b_m))}{q(h_{\theta_{m}}(b_m)}s_mp_mz_\mu=x$.
			
			For each $\xi \in \supp h_{\theta_{m}}^1(b_m) $, let $\psi_{m + 1}(\xi) \in \mathcal{B}$ be the open arc of $\mathbb{T}$ centered at $z_{\xi}$ satisfying (7).
			
			\item[Case 3] $h_{\theta_m}$ is of type 3. \\ Fix $\mu \in \supp h_{\theta_{m}}^{1, 1}(b_m)$ such that $|p(h_{\theta_{m}}(b_m), \mu)|  r_m > q(h_{\theta_{m}}(b_m), \mu)  p_m$. We can suppose that $\mu \in G_{m}^{1}$, otherwise we proceed as in Case 1.
			
			If $\xi \in (\supp h_{\theta_{m}}^1(b_m)  \cap G_{m}^{1}) \setminus \{\mu\}$, let $z_{\xi} \in \mathbb{T}$ be such that $d(h_{\theta_{m}}^1(b_m) )  p_m  z_{\xi}$ is the middle point of $\psi_{m}(\xi)$.
			
			If $\xi \in (\supp h_{\theta_{m}}^1(b_m)  \setminus G_{m}^{1}) \cap \omega$, set $z_{\xi} = 0 + \mathbb{Z}$.
			
			If $\xi \in \supp h_{\theta_{m}}^1(b_m)   \setminus (G_{m}^{1} \cup \omega)$, let $z_{\xi}$ be any element of $\mathbb{T}$.
			
			Apply Lemma \ref{lem2} to $d = a(h_{\theta_{m}}(b_m), \mu) s_m p_m$, $c = d(h_{\theta_m}^1(b_m))  p_m$, $\epsilon = r_m / (4s_m)$, $B =$ is the arc of same center as $\psi_{m}(\mu)$ and diameter $\epsilon$ and $a = \ \hbox{the middle point of} \ s_m  \psi_{m}(\theta_{m}) - \phi_{m + 1}(h_{\theta_m}^0(b_m)) - \sum_{\xi \in P_1 \setminus \{\mu\}} a(h_{\theta_m}(b_m), \xi)  s_m p_m z_{\xi}$ in order to obtain $z_{\mu} \in \mathbb{T}$ such that $c  z_{\mu} \in B$ and $d  z_{\mu} = a$.
			
			For each $\xi \in \supp h_{\theta_{m}}^1(b_m) $, let $\psi_{m + 1}(\xi) \in \mathcal{B}$ be the open arc of $\mathbb{T}$ centered at $z_{\xi}$ satisfying (7).
			
			\item[Case 4] $h_{\theta_m}$ is of type 4. \\ Fix $n \in \supp h_{\theta_{m}}^{1, 0}(b_m)$ such that $q(h_{\theta_{m}}(b_m), n)  r_m > s_m$. If $n \in G^1_m$, we proceed as in Case 2. If $n \notin G^1_m$, we proceed as follows:
			
			If $\xi \in (\supp h_{\theta_{m}}^1(b_m)  \cap G_{m}^{1})$, let $z_{\xi} \in \mathbb{T}$ be such that $d(h_{\theta_{m}}^1(b_m) )  p_m  z_{\xi}$ is the middle point of $\psi_{m}(\xi)$.
			
			If $\xi \in [(\supp h_{\theta_{m}}^1(b_m)  \setminus G_{m}^{1}) \cap \omega] \setminus \{n\}$, set $z_{\xi} = 0 + \mathbb{Z}$.
			
			If $\xi \in \supp h_{\theta_{m}}^1(b_m)  \setminus (G_{m}^{1} \cup \omega)$, let $z_{\xi}$ be any element of $\mathbb{T}$.
			
	
			Apply Lemma \ref{lem2} to $c = p(h_{\theta_m}(b_m),n)s_m$, $d =q(h_{\theta_m}(b_m),n)$, $\epsilon = r_m$, $a = 0 + \mathbb{Z}$ and $B = s_m  \psi_{m}(\theta_{m}) - \phi_{m + 1}(h_{\theta_m}^0(b_m)) - \sum_{\xi \in P_1 \setminus \{n\}} a(h_{\theta_m}(b_m), \xi)  s_mp_m z_{\xi}$ in order to obtain $x \in \mathbb{T}$ such that $d  x = a$ and $c x \in B$. Since $\mathbb{T}$ is divisible, take $z_n\in\mathbb{T}$ such that $\frac{d(h_{\theta_m}(b_m))}{q(h_{\theta_{m}}(b_m)}p_mz_n=x$.

			For each $\xi \in \supp h_{\theta_{m}}^1(b_m) $, let $\psi_{m + 1}(\xi) \in \mathcal{B}$ be the open arc of $\mathbb{T}$ centered at $z_{\xi}$ satisfying (7).
			
			\item[Case 5] $h_{\theta_m}$ is of type 5. \\ Fix $n \in \supp h_{\theta_{m}}^{1, 0}(b_m)$ such that $|p(h_{\theta_{m}}(b_m), n)|   r_m > 8 n!   q(h_{\theta_{m}}(b_m), n)   p_m$. If $n \in G^1_m$ we proceed as in Case 3. So suppose $n \notin G^1_m$.
			
			If $\xi \in (\supp h_{\theta_{m}}^1(b_m)  \cap G_{m}^{1})$, let $z_{\xi} \in \mathbb{T}$ be such that $d(h_{\theta_{m}}^1(b_m) )   p_m   z_{\xi}$ is the middle point of $\psi_{m}(\xi)$.
			
			If $\xi \in [(\supp h_{\theta_{m}}^1(b_m)  \setminus G_{m}^{1}) \cap \omega] \setminus \{n\}$, set $z_{\xi} = 0 + \mathbb{Z}$.
			
			If $\xi \in \supp h_{\theta_{m}}^1(b_m)  \setminus (G_{m}^{1} \cup \omega)$, let $z_{\xi}$ be any element of $\mathbb{T}$.
			
			Apply Lemma \ref{lem2} to $d = a(h_{\theta_{m}}(b_m), n)  p_ms_m$, $c = d(h_{\theta_m}^1(b_m)  p_m   n!$, $\epsilon = r_m / (8  s_m)$, $B$ is the open arc of $\mathbb{T}$ centered at $0 + \mathbb{Z}$ with length $\epsilon$ and $a$ is the middle point of $s_m \psi_{m}(\theta_{m}) - \phi_{m + 1}(h_{\theta_m}^0(b_m)) - \sum_{\xi \in P_1 \setminus \{n\}} a(h_{\theta_m}(b_m), \xi) p_ms_m z_{\xi}$ in order to obtain $z_{n} \in \mathbb{T}$ such that $c   z_{n} \in B$ and $d   z_{n} = a$.
			
			For each $\xi \in \supp h_{\theta_{m}}^1(b_m) $, let $\psi_{m + 1}(\xi) \in \mathcal{B}$ be the open arc of $\mathbb{T}$ centered at $z_{\xi}$ satisfying (7).
			
			\item[Case 6] $h_{\theta_m}$ is of type 6. \\
			 Fix $n \in \supp h_{\theta_{m}}^{1, 0}(b_m) \setminus G_{m}^{1}$ such that $n!   p_m   r_m > |h_{\theta_{m}}(b_m)(n)| s_m$.
			
			If $\xi \in \supp h_{\theta_{m}}^1(b_m)  \cap G_{m}^{1}$, let $z_{\xi} \in \mathbb{T}$ be such that $d(h_{\theta_{m}}^1(b_m) )   p_m   z_{\xi}$ is the middle point of $\psi_{m}(\xi)$.
			
			If $\xi \in [(\supp h_{\theta_{m}}^1(b_m)  \setminus G_{m}^{1}) \cap \omega] \setminus \{n\}$, set $z_{\xi} = 0 + \mathbb{Z}$.
			
			If $\xi \in \supp h_{\theta_{m}}^1(b_m)  \setminus (G_{m}^{1} \cup \omega)$, let $z_{\xi}$ be any element of $\mathbb{T}$.
			
			Apply Lemma \ref{lem2} to $d = d(h_{\theta_m}^1(b_m)   p_m   n!$, $c = a(h_{\theta_{m}}(b_m), n)  s_m$, $\epsilon = r_m$, $B =s_m  \psi_{m}(\theta_{m}) - \phi_{m + 1}(h_{\theta_m}^0(b_m)) - \sum_{\xi \in P_1 \setminus \{n\}} a(h_{\theta_m}(b_m), \xi)   s_m p_m   z_{\xi}$ and $a = 0 + \mathbb{Z}$ in order to obtain $z_{n} \in \mathbb{T}$ such that $c   z_{n} \in B$ and $d   z_{n} = a$.
			
			For each $\xi \in \supp h_{\theta_{m}}^1(b_m) $, let $\psi_{m + 1}(\xi) \in \mathcal{B}$ be the open arc of $\mathbb{T}$ centered at $z_{\xi}$ satisfying (7).
			
			\item[Case 7] $h_{\theta_m}$ is of type 7. \\ Fix $a > l(r_m / (32   s_m), t)$, where $t \in \mathbb{R} \setminus \mathbb{Q}$ witness that $h_{\theta_{m}}$ is of type 7. Fix $n \in \supp h_{\theta_{m}}^{1, 0}(b_m) \setminus G_{m}^{1}$ such that $\left| \frac{h_{\theta_m}(b_m)(n)}{n!}   a - t   a \right| < \frac{r_m   p_m}{32   s_m^{2}}$.
			
			If $\xi \in \supp h_{\theta_{m}}^1(b_m)  \cap G_{m}^{1}$, let $z_{\xi} \in \mathbb{T}$ be such that $d(h_{\theta_{m}}^1(b_m) )   p_m   z_{\xi}$ is the middle point of $\psi_{m}(\xi)$.
			
			If $\xi \in [(\supp h_{\theta_{m}}^1(b_m)  \setminus G_{m}^{1}) \cap \omega] \setminus \{n\}$, set $z_{\xi} = 0 + \mathbb{Z}$.
			
			If $\xi \in \supp h_{\theta_{m}}^1(b_m)  \setminus (G_{m}^{1} \cup \omega)$, let $z_{\xi}$ be any element of $\mathbb{T}$.
			
			Apply Lemma \ref{lem_kronecker_T2} to $c_1 = a(h_{\theta_{m}}(b_m), n)   p_m$, $c_2 = d(h_{\theta_m}^1(b_m)   p_m   n!$, $\epsilon = r_m / (8   s_m^2)$, $\tilde B_1 = s_m   \psi_{m}(\theta_{m}) - \phi_{m + 1}(h_{\theta_m}^0(b_m)) - \sum_{\xi \in P_1 \setminus \{n\}} a(h_{\theta_m}(b_m), \xi)   s_m p_m   z_{\xi}$, $B_1$ is an open arc such that $s_mB_1=\tilde B_1$ and $B_2$ is the open arc of $\mathbb{T}$ centered at $0 + \mathbb{Z}$ with length $\epsilon$ in order to obtain $z_n \in \mathbb{T}$ such that $s_mc_1   z_n \in \tilde B_1$ and $c_2   z_n \in B_2$.
			
			For each $\xi \in \supp h_{\theta_{m}}^1(b_m) $, let $\psi_{m + 1}(\xi) \in \mathcal{B}$ be the open arc of $\mathbb{T}$ centered at $z_{\xi}$ satisfying (7).
			
			\item[Case 8] $h_{\theta_m}$ is of type 8. \\ Fix $n \in \supp h_{\theta_{m}}^{1, 0}(b_m) \setminus G_{m}^{1}$ such that $|h_{\theta_{m}}(b_m)(n)|   r_m > 8   p_m   n!$.
			
			If $\xi \in \supp h_{\theta_{m}}^1(b_m)  \cap G_{m}^{1}$, let $z_{\xi} \in \mathbb{T}$ be such that $d(h_{\theta_{m}}^1(b_m) )   p_m   z_{\xi}$ is the middle point of $\psi_{m}(\xi)$.
			
			If $\xi \in [(\supp h_{\theta_{m}}^1(b_m)  \setminus G_{m}^{1}) \cap \omega] \setminus \{n\}$, set $z_{\xi} = 0 + \mathbb{Z}$.
			
			If $\xi \in \supp h_{\theta_{m}}^1(b_m)  \setminus (G_{m}^{1} \cup \omega)$, let $z_{\xi}$ be any element of $\mathbb{T}$.
			
			Apply Lemma \ref{lem2} to $d = a(h_{\theta_{m}}(b_m), n)   s_mp_m$, $c = d(h_{\theta_m}^1(b_m)   p_m   n!$, $\epsilon = r_m / (8   s_m)$, $B$ is the the open arc of $\mathbb{T}$ centered at $0 + \mathbb{Z}$ with length $\epsilon$ and $a$ is the middle point of $s_m   \psi_{m}(\theta_{m}) - \phi_{m + 1}(h_{\theta_m}^0(b_m)) - \sum_{\xi \in P_1 \setminus \{n\}} a(h_{\theta_m}(b_m), \xi)   s_mp_m   z_{\xi}$ in order to obtain $z_{n} \in \mathbb{T}$ such that $c   z_{n} \in B$ and $d   z_{n} = a$.
			
			For each $\xi \in \supp h_{\theta_{m}}^1(b_m) $, let $\psi_{m + 1}(\xi) \in \mathcal{B}$ be the open arc of $\mathbb{T}$ centered at $z_{\xi}$ satisfying (7).
			
			\item[Case 9] $h_{\theta_m}$ is of type 9. \\
			If $\xi \in \supp h_{\theta_{m}}^1(b_m)  \cap G_{m}^{1}$, let $z_{\xi} \in \mathbb{T}$ be such that $d(h_{\theta_{m}}^1(b_m) )   p_m   z_{\xi}$ is the middle point of $\psi_{m}(\xi)$.
			
			If $\xi \in \supp h_{\theta_{m}}^1(b_m)  \setminus (G_{m}^{1} \cup \omega)$, let $z_{\xi}$ be any element of $\mathbb{T}$.
			
			If $n \in (\supp h_{\theta_{m}}^1(b_m)  \setminus G_{m}^{1}) \cap \omega$, let $A_n \in \mathcal{B}$ be centered at $0 + \mathbb{Z}$ with length $r_m / (4   s_{m+1}   n!)$.
			
			Since $\sum_{n \in \omega \setminus G_{m}^{1}} |a(h_{\theta_{m}}(b_m), n)|   s_mp_m  \delta(A_n) = \sum_{n \in \omega \setminus G_{m}^{1}} \frac{|h_{\theta_{m}}(b_m)(n)|}{n!}   \frac{r_m}{4} \geq |\supp h_{\theta_{m}}^{1, 0}(b_m) \setminus G_{m}^{1}|   \min \left\{ \frac{|h_{\theta_{m}}(b_m)(n)|}{n!} : n \in \supp h_{\theta_{m}}(b_m) \right\}   \frac{r_m}{4} > 1$, it follows that $\sum_{n \in \omega \setminus G_{m}^{1}} a(h_{\theta_{m}}(b_m), n)   s_m   A_n = \mathbb{T}$.
			
		There exists $z_n \in A_n$ such that $\sum_{\xi \in \omega \setminus G_{m}^{1}} a(h_{\theta_m}(b_m), n)   s_m   z_{n} \in s_m   \psi_{m}(\theta_{m}) - \phi_{m + 1}(h_{\theta_m}^0(b_m)) - \sum_{\xi \in P_1 \setminus (\omega \setminus G_{m}^{1})} a(h_{\theta_m}(b_m), \xi)   p_ms_m   z_{\xi}$ for each $n \in \supp h_{\theta_{m}}^{1, 0}(b_m)  \setminus G_{m}^{1}$.
			
			For each $\xi \in \supp h_{\theta_{m}}^1(b_m) $, let $\psi_{m + 1}(\xi) \in \mathcal{B}$ be the open arc of $\mathbb{T}$ centered at $z_{\xi}$ satisfying (7).
			
			\item[Case 10] $h_{\theta_m}$ is of type 10. \\ Fix $n \in \supp h_{\theta_{m}}^{1, 0}(b_m) \setminus G_{m}^{1}$ such that $n!     r_m > |h_{\theta_{m}}(b_m)(n)|   s_m$.
			
			If $\xi \in \supp h_{\theta_{m}}^1(b_m)  \cap G_{m}^{1}$, let $z_{\xi} \in \mathbb{T}$ be such that $d(h_{\theta_{m}}^1(b_m) )   p_m   z_{\xi}$ is the middle point of $\psi_{m}(\xi)$.
			
			If $\xi \in [(\supp h_{\theta_{m}}^1(b_m)  \setminus G_{m}^{1}) \cap \omega] \setminus \{n\}$, set $z_{\xi} = 0 + \mathbb{Z}$.
			
			If $\xi \in \supp h_{\theta_{m}}^1(b_m)  \setminus (G_{m}^{1} \cup \omega)$, let $z_{\xi}$ be any element of $\mathbb{T}$.
			
			Apply Lemma \ref{lem2} to $d = d(h_{\theta_m}^1(b_m))   p_m   n!$, $c = a(h_{\theta_{m}}(b_m), n)   s_mp_m$, $\epsilon = r_m$, $B = s_m   \psi_{m}(\theta_{m}) - \phi_{m + 1}(h_{\theta_m}^0(b_m)) - \sum_{\xi \in P_1 \setminus \{n\}} a(h_{\theta_m}(b_m), \xi)   s_mp_m   z_{\xi}$ and $a = 0 + \mathbb{Z}$ in order to obtain $z_{n} \in \mathbb{T}$ such that $c   z_{n} \in B$ and $d   z_{n} = a$.
			
			For each $\xi \in \supp h_{\theta_{m}}^1(b_m) $, let $\psi_{m + 1}(\xi) \in \mathcal{B}$ be the open arc of $\mathbb{T}$ centered at $z_{\xi}$ satisfying (7).
			
			\item[Case 11] $h_{\theta_m}$ is of type 11. \\ If $\xi \in \supp h_{\theta_m}^1(b_m)$, let $\psi_{m + 1}(\xi)$ be an element of $\mathcal{B}$ satisfying (6), (7) and (11). Let $v_m=c_{m+1}   \prod_{k < m} d(h_{\theta_k}^0(b_k))$. Use the divisibility of $\mathbb{T}$ to extend $\phi_{m}$ to a group homomorphism $\tilde{\phi}_{m + 1} : \langle \{\Lambda_{(\xi, n), v_m} : (\xi, n) \in G_{m + 1}^{0}\} \rangle \to \mathbb{T}$.
			
			Fix $(\mu, l) \in \supp h_{\theta_m}^0(b_m)$ such that $q(h_{\theta_m}(b_m), (\mu, l))   r_m > v_m$.	If $(\xi, n) \in G_{m + 1}^{0} \setminus \{(\mu, l)\}$, let $x_{(\xi, n)} \in \mathbb{T}$ be such that $d(h_{\theta_m}^0(b_m))   x_{(\xi, n)} = \tilde{\phi}_{m + 1}(\Lambda_{(\xi, n), v_m})$.
			
			Apply Lemma \ref{lem2} by using $c = p(h_{\theta_m}^0(b_m)(\mu))$, $d = q(h_{\theta_m}^0(b_m)(\mu))$, $\epsilon = r_m/v_m$, $a = \tilde{\phi}_{m + 1}(\Lambda_{(\mu, l), v_m})$ $\tilde B = s_m   \psi_{m}(\theta_{m}) - \sum_{(\xi, n) \in (P_0 \times \omega) \setminus \{(\mu, l)\}} a(h_{\theta_m}^0(b_m), (\xi, n))   v_m   x_{(\xi, n)} - \sum_{\xi \in P_1} a(h_{\theta_m}(b_m), \xi)   s_mp_m   z_{\xi}$ and $B$ be an open arc such that $v_mB=\tilde B$ where $z_{\xi}$ is the middle point of $\psi_{m + 1}(\xi)$ for every $\xi \in \supp h_{\theta_m}^1(b_m)$, in order to obtain $x \in \mathbb{T}$ such that $dx = a$ and $cx \in B$. Let $x_{(\mu, l)} \in \mathbb T$ be such that $\frac{d(h^0_{\theta_m}(b_m)(\mu))}{q(h^0_{\theta_m}(b_m)(\mu))}x_{(\mu, l)}=x$.
			
			Extend $\tilde{\phi}_{m + 1}$ to a group homomorphism $\phi_{m + 1} : \langle \{\Lambda_{(\xi, n),    v_md(h_{\theta_m}^0(b_m))} : (\xi, n) \in G_{m + 1}^{0}\} \rangle \to \mathbb{T}$ such that $\phi_{m + 1}(\Lambda_{(\xi, n), v_md(h_{\theta_m}^0(b_m))}) = x_{(\xi, n)}$ for every $(\xi, n) \in G_{m + 1}^{0}$.
			
			\item[Case 12] $h_{\theta_m}$ is of type 12. \\ In this case, $\supp h_{\theta_m}^1(b_m) = \emptyset$. Note that $\{h_{\theta_{m}}^0(b_m)\} \cup \{\Lambda_{(\xi, n), c_m   \prod_{k < m} d(h_{\theta_{k}}^0(b_k) )} : (\xi, n) \in G_{m}^{0}\}$ is an independent subset of the group $(Q / Z)^{((P_0 \cap E) \times \omega)}$. Since $\ordem(h_{\theta_{m}}(b_m)) = \ordem(y_{\theta_{m}})$, we can extend $\phi_{m}$ to a group homomorphism $\phi_{m + 1} : \langle \{\Lambda_{(\xi, n), c_{m + 1}   \prod_{k < m + 1} d(h_{\theta_k}^0(b_k))} : (\xi, n) \in G_{m + 1}^{0}\} \rangle \to \mathbb{T}$ satisfying (10). \qedhere
			
		\end{description}
	\end{proof}
	
	By induction, we have $r_{m + 1} > 0$, $b_m \in R_{\theta_m}$, $G_{m + 1}^{0} \in [P_0 \times \omega]^{< \omega}$, $G_{m + 1}^{1} \in [P_1]^{< \omega}$, $\phi_{m + 1} : \langle \{\Lambda_{(\mu, l), c_{m + 1}   \prod_{k < m + 1} d(h_{\theta_k}^0(b_k))} : (\mu, l) \in G_{m + 1}^{0}\} \rangle \to \mathbb{T}$ and $\psi_{m + 1} : G_{m}^{1} \to \mathcal{B}$ satisfying (1)-(11) for every $m \in \omega$.
	
	Since $\mathbb{T}$ is a divisible group, it is possible to extend $\bigcup_{m \in \omega} \phi_{m}$ to a group homomorphism $\phi_{\alpha, E}^{0} : (\mathbb{Q} / \mathbb{Z})^{((P_0 \cap E) \times \omega)} \to \mathbb{T}$.
	
	Since $\mathbb{T}$ is a complete metric space and $(r_m : m \in \omega)$ is a sequence of positive real numbers converging to 0, we conclude from (1), (6) and (7) that if $\xi \in P_1 \cap E$, then $\bigcap_{m \geq N_{\xi}} s_m  \psi_{m}(\xi) = \bigcap_{m \geq N_{\xi}} s_m   \overline{\psi_{m}(\xi)}$ is a one-element set, where $N_{\xi} = \min\{m \in \omega : \xi \in G_{m}^{1}\}$. Denote by $\phi_{\alpha, E}^{1}(\chi_{\xi})$ the unique element of this set. If $m \geq N_{\xi}$, then there exists a unique element of $\psi_{m}(\xi)$ whose multiplication by $s_m$ is equal to $\phi_{\alpha, E}^{1}(\chi_{\xi})$. Denote this element by $\phi_{\alpha, E}^{1} \left( \frac{1}{s_m}   \chi_{\xi} \right)$. The divisibility of $\mathbb{T}$ allows us to extend $\phi_{\alpha, E}^{1}$ to a group homomorphism whose domain is $\mathbb{Q}^{(P_1 \cap E)}$ and whose range is $\mathbb{T}$.
	
	Define $\phi_{\alpha, E} : (\mathbb{Q} / \mathbb{Z})^{((P_0 \cap E) \times \omega)} \oplus \mathbb{Q}^{(P_1 \cap E)} \to \mathbb{T}$ by $\phi_{\alpha, E}(H_0, H_1) = \phi_{\alpha, E}^{0}(H_0) + \phi_{\alpha, E}^{1}(H_1)$ for every $(H_0, H_1) \in (\mathbb{Q} / \mathbb{Z})^{((P_0 \cap E) \times \omega)} \oplus \mathbb{Q}^{(P_1 \cap E)}$. It remains to show that (i)-(iv) are satisfied.
	
	If $H_{1} = 0$, then $\phi_{\alpha, E}(H_\alpha) = \phi_{\alpha, E}^{0}(H_{0}) = \phi_{0}(H_{0}) \neq 0 + \mathbb{Z}$. If $H_{1} \neq 0$, then $\phi_{\alpha, E}(H_\alpha) \in \phi_{0}(H_{0}) + \sum_{\xi \in P_1} a(H_{\alpha}, \xi)   \psi_{0}(\xi)$. Since $0 + \mathbb{Z} \not \in \phi_{0}(H_{0}) + \sum_{\xi \in P_1} a(H_{\alpha}, \xi)   \psi_{0}(\xi)$, (i) holds.
	
	If $\xi \in E$, set $I_{\xi} = \{m \in \omega : \xi = \theta_{m}\}$ and $S_{\xi} = \{b_m : m \in I_{\xi}\}$. It is clear that $S_{\xi} \in [R_{\xi}]^{\omega}$. We will show that if $\xi \in E \cap J_1$ (respectively, $\xi \in E \cap \bigcup_{n \in D} J_n$), then the sequence $(\phi_{\alpha, E}(h_{\xi}(n)) : n \in S_{\xi})$ converges to $\phi_{\alpha, E}(0, \chi_{\xi}) = \phi_{\alpha, E}^{1}(\chi_{\xi})$ (respectively, $\phi_{\alpha, E}(y_{\xi}, 0) = \phi_{\alpha, E}^{0}(y_{\xi})$).
	
	Fix $\xi \in E \cap J_1$. Since $\phi_{\alpha, E}(h_{\theta_{m}}(b_m))$ is an element of the arc $ \phi_{m + 1}(h_{\theta_{m}}^0(b_m)) + \sum_{\xi \in P_1} a(h_{\theta_{m}}(b_m), \xi)   s_mp_m   \psi_{m + 1}(\xi)$ and $\phi_{\alpha, E}(0, \chi_{\theta_{m}}) \in s_m  \psi_{m + 1}(\theta_{m})$, it follows from (7) and (8) that $\delta(\phi_{\alpha, E}(h_{\theta_{m}}(b_m)), \phi_{\alpha, E}(0, \chi_{\theta_{m}})) \leq \sum_{\xi \in P_1} |a(h_{\theta_{m}}(b_m), \xi)|   \frac{r_{m + 1}}{d(h_{\theta_{m}}^1(b_m) )} + r_m < 2   r_m$. It follows from $r_m \to 0$ that (ii) holds.
	
	It follows from (10) that if $\xi \in E \cap \bigcup_{n \in D} J_n$, then the sequence $(\phi_{\alpha, E}(h_{\xi}(n)) : n \in S_{\xi})$ converges to $\phi_{\alpha, E}^{0}(y_{\xi}) = \phi_{\alpha, E}(y_{\xi}, 0)$. So, (iii) holds.
	
	Fix $p \in \mathbb{Z} \setminus \{0\}$.  Let $\epsilon>0$ be given. There exists $j$ such that $p|s_j$ and $r_j<\epsilon$. Let $n \in \omega\setminus G^1_j$ and let $m\geq j$ be the first number such that $n \in G_{m+1}^1$. By definition, $\phi_{\alpha, E} \left( 0, \frac{1}{s_{m+1}}   \chi_{n} \right)\in\psi_{m+1}(n)$, therefore, by (11), $n!\phi_{\alpha, E} \left( 0, \frac{1}{s_{m}}\right)\subset (-\frac{r_m}{s_m}, \frac{r_m}{s_m})+\mathbb Z$. Then, multiplying by $\frac{s_m}{p}$, $n!\phi_{\alpha, E} \left( 0, \frac{1}{s_{m}}\right)\subset (-\frac{r_m}{|p|}, \frac{r_m}{|p|})+\mathbb Z\subset (-r_m, r_m)+\mathbb Z$, so (4) follows.

\end{proof}

\begin{lem}\label{lem_ext_hom}
	Let $\alpha < \mathfrak{c}$. For each $\xi \in J_1 \cup \bigcup_{n \in D} J_n$, consider $R_{\xi} \in [\omega]^{\omega}$. There exists a group homomorphism $\phi_{\alpha}: (\mathbb{Q} / \mathbb{Z})^{(P_0 \times \omega)} \oplus \mathbb{Q}^{(P_1)} \to \mathbb{T}$ satisfying the following conditions:
	
	\begin{enumerate}[label=(\roman*)]
		
		\item $\phi_{\alpha}(H_{\alpha}) \neq 0 + \mathbb{Z}$;
		
		\item for each $\xi \in J_1$, there exists $S_{\xi} \in [R_{\xi}]^{\omega}$ such that the sequence $(\phi_{\alpha}(h_{\xi}(n)) : n \in S_{\xi})$ converges to $\phi_{\alpha}(0, \chi_{\xi})$;
		
		\item for each $\xi \in \bigcup_{n \in D} J_n$, there exists $S_{\xi} \in [R_{\xi}]^{\omega}$ such that the sequence $(\phi_{\alpha}(h_{\xi}(n)) : n \in S_{\xi})$ converges to $\phi_{\alpha}(y_{\xi}, 0)$;
		
		\item for each $p \in \mathbb{Z} \setminus \{0\}$, the sequence $\left( \phi_{\alpha} \left( 0, \frac{1}{p}   n!   \chi_{n} \right) : n \in \omega \right)$ converges to $0 + \mathbb{Z}$.
		
	\end{enumerate}
\end{lem}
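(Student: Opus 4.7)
The plan is to first apply Proposition \ref{prop_const_E} to fix a countable $E \in [\mathfrak{c}]^{\omega}$ satisfying its four conditions for $H_\alpha$, and then to invoke Lemma \ref{lem_const_hom} with the restricted family $(R_\xi : \xi \in E \cap (J_1 \cup \bigcup_{n \in D} J_n))$ to obtain a homomorphism $\phi_{\alpha, E}$ on the countable subgroup $(\mathbb{Q}/\mathbb{Z})^{((P_0 \cap E) \times \omega)} \oplus \mathbb{Q}^{(P_1 \cap E)}$ together with $S_\xi \in [R_\xi]^{\omega}$ for each $\xi \in E \cap (J_1 \cup \bigcup_{n \in D} J_n)$ witnessing (i)-(iv) restricted to $E$. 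Conditions (i) and (iv) of the present lemma will transfer to any group-homomorphic extension $\phi_\alpha \supset \phi_{\alpha, E}$ automatically, since $\supp H_\alpha \subset (E \times \omega) \cup E$ and since the support of $\frac{1}{p}\,n!\,\chi_n$ is $\{n\} \subset \omega \subset E$. The remaining task is therefore to extend $\phi_{\alpha, E}$ to the full group in such a way that (ii) and (iii) also hold for every $\xi \in (J_1 \cup \bigcup_{n \in D} J_n) \setminus E$.

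To build this extension I would first declare $\phi_\alpha$ to be $0 + \mathbb{Z}$ on every ``free'' generator outside $E$: $\chi_\beta$ and all its rational multiples for $\beta \in P_1 \setminus (E \cup J_1)$, every $\Lambda_{(\beta, k), m}$ for $\beta \in P_0 \setminus (E \cup \bigcup_{n \in D} J_n)$, and every $\Lambda_{(\beta, k), m}$ for $\beta \in \bigcup_{n \in D} J_n \setminus E$ with $k \in \omega \setminus \supp y_\beta$. Since $0$ is compatible with every relation of the form $m \Lambda_{(\beta, k), m' m} = \Lambda_{(\beta, k), m'}$, this extension is consistent. The heart of the argument is then a transfinite recursion along an enumeration $\{\xi_\gamma : \gamma < \mathfrak{c}\}$ of $(J_1 \cup \bigcup_{n \in D} J_n) \setminus E$ in strictly increasing ordinal order. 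At stage $\gamma$ the support condition $\supp h_{\xi_\gamma}(n) \subset (\xi_\gamma \times \omega) \cup \xi_\gamma$ ensures that every coordinate of $h_{\xi_\gamma}(n)$ is associated to an ordinal strictly smaller than $\xi_\gamma$, and hence to data already fixed (by $\phi_{\alpha, E}$, by the initial ``free $= 0$'' declaration, or by some previous stage $\gamma' < \gamma$). Consequently the sequence $(\phi_\alpha(h_{\xi_\gamma}(n)) : n \in R_{\xi_\gamma})$ is completely determined; by compactness of $\mathbb{T}$ one selects $S_{\xi_\gamma} \in [R_{\xi_\gamma}]^{\omega}$ with $\phi_\alpha(h_{\xi_\gamma}(n)) \to a_{\xi_\gamma}$ along $S_{\xi_\gamma}$. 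If $\xi_\gamma \in J_1$, set $\phi_\alpha(\chi_{\xi_\gamma}) = a_{\xi_\gamma}$ and extend to rational multiples via divisibility of $\mathbb{T}$; if $\xi_\gamma \in \bigcup_{n \in D} J_n$, the sequence lies in the finite subgroup $\mathbb{T}[\ordem(y_{\xi_\gamma})]$, so $S_{\xi_\gamma}$ may be refined to make the sequence constantly equal to $a_{\xi_\gamma}$, and I then extend $\phi_\alpha$ over $(\mathbb{Q}/\mathbb{Z})^{(\supp y_{\xi_\gamma})}$ by first defining $\phi_\alpha(y_{\xi_\gamma}) = a_{\xi_\gamma}$ on $\langle y_{\xi_\gamma} \rangle$ (well-defined since both sides have order $\ordem(y_{\xi_\gamma})$) and then invoking the divisibility of $\mathbb{T}$. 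A final appeal to divisibility produces a group homomorphism on all of $(\mathbb{Q}/\mathbb{Z})^{(P_0 \times \omega)} \oplus \mathbb{Q}^{(P_1)}$.

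The main obstacle is certifying the coherence of the transfinite recursion, namely that at each stage the value of $\phi_\alpha$ on $\supp h_{\xi_\gamma}$ is genuinely determined and that the new assignments do not conflict with earlier choices. This rests entirely on the ordinal ordering of the stages combined with the support condition $\supp h_\xi \subset (\xi \times \omega) \cup \xi$, which together guarantee that the answer-coordinates $\chi_{\xi_\gamma}$ (for $\xi_\gamma \in J_1$) or $\{\xi_\gamma\} \times \omega$ (for $\xi_\gamma \in \bigcup_{n \in D} J_n$) can influence only $h_{\xi'}$ with $\xi' > \xi_\gamma$, which are processed at later stages. Conditions (ii) and (iii) for $\xi \in E$ are inherited from Lemma \ref{lem_const_hom}, while for $\xi \notin E$ they are exactly the convergences arranged during the recursion.
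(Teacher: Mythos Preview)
Your proposal is correct and follows essentially the same route as the paper: apply Proposition~\ref{prop_const_E} and Lemma~\ref{lem_const_hom} to get $\phi_{\alpha,E}$ on the countable subgroup, then extend by a transfinite recursion along an increasing enumeration of the ordinals outside $E$, using the support condition $\bigcup_n \supp h_\xi(n)\subset(\xi\times\omega)\cup\xi$ to guarantee that $\phi_\alpha(h_{\xi_\gamma}(n))$ is already determined at stage $\gamma$, and using sequential compactness of $\mathbb{T}$ to pick the limit value for the new generator. Your write-up is in fact more explicit than the paper's in two places: you separate out the ``free $=0$'' declaration on coordinates not in $J_1\cup\bigcup_{n\in D}J_n$ (the paper silently folds these into its enumeration of $\mathfrak{c}\setminus E$), and you justify why the assignment $\phi_\alpha(y_{\xi_\gamma})=a_{\xi_\gamma}$ is consistent via the observation that the sequence lies in the finite group $\mathbb{T}[\ordem(y_{\xi_\gamma})]$ (one minor correction: you only need $\ordem(a_{\xi_\gamma})\mid\ordem(y_{\xi_\gamma})$, not equality, for the map on $\langle y_{\xi_\gamma}\rangle$ to be well-defined).
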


\begin{proof}
	According to Proposition \ref{prop_const_E}, there exists $E \in [\mathfrak{c}]^{\omega}$ such that $\supp H_{\alpha} \subset (E \times \omega) \cup E$, $|E \cap (J_1 \cup \bigcup_{n \in D} J_n)| = \omega$, $\omega \subset E$ and $\bigcup_{n \in \omega} \supp h_{\xi}(n) \subset (E \times \omega) \cup E$ for every $\xi \in E \cap (J_1 \cup \bigcup_{n \in D} J_n)$.
	
	It follows from Lemma \ref{lem_const_hom} that there exists a group homomorphism $\phi_{\alpha, E} : (\mathbb{Q} / \mathbb{Z})^{((P_0 \cap E) \times \omega)} \oplus \mathbb{Q}^{(P_1 \cap E)} \to \mathbb{T}$ satisfying the following conditions:
	
	\begin{enumerate}[label=(\arabic*)]
		
		\item $\phi_{\alpha, E}(H_{\alpha}) \neq 0 + \mathbb{Z}$;
		
		\item for each $\xi \in E \cap J_1$, there exists $S_{\xi} \in [R_{\xi}]^{\omega}$ such that the sequence $(\phi_{\alpha, E}(h_{\xi}(n)) : n \in S_{\xi})$ converges to $\phi_{\alpha, E}(0, \chi_{\xi})$;
		
		\item for each $\xi \in E \cap \bigcup_{n \in D} J_n$, there exists $S_{\xi} \in [R_{\xi}]^{\omega}$ such that the sequence $(\phi_{\alpha, E}(h_{\xi}(n)) : n \in S_{\xi})$ converges to $\phi_{\alpha, E}(y_{\xi}, 0)$;
		
		\item for each $p \in \mathbb{Z} \setminus \{0\}$, the sequence $\left( \phi_{\alpha, E} \left( 0, \frac{1}{p}   n!   \chi_{n} \right) : n \in \omega \right)$ converges to $0 + \mathbb{Z}$.
		
	\end{enumerate}
	
	Let $\{\alpha_{\xi} : \xi < \mathfrak{c}\}$ be a strictly increasing enumeration of $\mathfrak{c} \setminus E$. Choose $S_{\alpha_{0}} \in [R_{\alpha_{0}}]^{\omega}$ such that $\{\phi_{\alpha, E}(h_{\alpha_{0}}(n)) : n \in S_{\alpha_{0}}\}$ is convergent. Note that this is possible, since $\mathbb{T}$ is sequentially compact.
	
	If $\alpha_{0} \in J_1$, denote by $\tilde{\phi}_{\alpha, E \cup \{\alpha_{0}\}}(0, \chi_{\alpha_{0}})$ the limit point of $(\phi_{\alpha, E}(h_{\alpha_{0}}(n)) : n \in S_{\alpha_{0}})$ and put $\tilde{\phi}_{\alpha, E \cup \{\alpha_{0}\}}(H_0, H_1) = \phi_{\alpha, E}(H_0, H_1)$ for every $(H_0, H_1) \in (\mathbb{Q} / \mathbb{Z})^{((P_0 \cap E) \times \omega)} \oplus \mathbb{Q}^{(P_1 \cap E)}$. Since $\mathbb{T}$ is divisible, it is possible to extend $\tilde{\phi}_{\alpha, E \cup \{\alpha_{0}\}}$ to a group homomorphism $\phi_{\alpha, E \cup \{\alpha_{0}\}} : (\mathbb{Q} / \mathbb{Z})^{((P_0 \cap E) \times \omega)} \oplus \mathbb{Q}^{((P_1 \cap E) \cup \{\alpha_{0}\})} \to \mathbb{T}$.
	
	If $\alpha_{0} \in \cup_{n \in D} J_n$, denote by $\tilde{\phi}_{\alpha, E \cup \{\alpha_{0}\}}(y_{\alpha_{0}}, 0)$ the limit point of $(\phi_{\alpha, E}(h_{\alpha_{0}}(n)) : n \in S_{\alpha_{0}})$ and put $\tilde{\phi}_{\alpha, E \cup \{\alpha_{0}\}}(H_0, H_1) = \phi_{\alpha, E}(H_0, H_1)$ for every $(H_0, H_1) \in (\mathbb{Q} / \mathbb{Z})^{((P_0 \cap E) \times \omega)} \oplus \mathbb{Q}^{(P_1 \cap E)}$. Since $\mathbb{T}$ is divisible, it is possible to extend $\tilde{\phi}_{\alpha, E \cup \{\alpha_{0}\}}$ to a group homomorphism $\phi_{\alpha, E \cup \{\alpha_{0}\}} : (\mathbb{Q} / \mathbb{Z})^{(((P_0 \cap E) \cup \{\alpha_{0}\}) \times \omega)} \oplus \mathbb{Q}^{(P_1 \cap E)} \to \mathbb{T}$.
	
	Repeating this construction inductively, we obtain a group homomorphism $\phi_{\alpha}: (\mathbb{Q} / \mathbb{Z})^{(P_0 \times \omega)} \oplus \mathbb{Q}^{(P_1)} \to \mathbb{T}$ satisfying (i)-(iv).
\end{proof}

The assumption $\mathfrak{p} = \mathfrak{c}$ together with Lemma \ref{lem_ext_hom} implies Proposition \ref{prop_hom}, which will be restated and proved below.

\begin{myprop2}[$\mathfrak{p} = \mathfrak{c}$]
	For each $\alpha < \mathfrak{c}$ and each $\xi \in J_1 \cup \bigcup_{n \in D} J_n$ there exists $S_{\xi, \alpha} \in [\omega]^{\omega}$ such that if $\alpha < \beta < \mathfrak{c}$, then $S_{\xi, \beta} \subset^{*} S_{\xi, \alpha}$. There also exists a group homomorphism $\phi_{\alpha} : (\mathbb{Q} / \mathbb{Z})^{(P_0 \times \omega)} \oplus \mathbb{Q}^{(P_1)} \to \mathbb{T}$ satisfying the following conditions:
	
	\begin{enumerate}[label=(\roman*)]
		
		\item $\phi_{\alpha}(H_{\alpha}) \neq 0 + \mathbb{Z}$;
		
		\item if $\xi \in J_1$, then the sequence $(\phi_{\alpha}(h_{\xi}(n)) : n \in S_{\xi, \alpha})$ converges to $\phi_{\alpha}(0, \chi_{\xi})$;
		
		\item if $\xi \in \bigcup_{n \in D} J_n$, then the sequence $(\phi_{\alpha}(h_{\xi}(n)) : n \in S_{\xi, \alpha})$ converges to $\phi_{\alpha}(y_{\xi}, 0)$;
		
		\item for each $p \in \mathbb{Z} \setminus \{0\}$, the sequence $\left(\phi_{\alpha} \left( 0, \frac{1}{p}   n!   \chi_{n} \right) : n \in \omega \right)$ converges to $0 + \mathbb{Z}$.
		
	\end{enumerate}
\end{myprop2}

\begin{proof}
	For each $\xi \in J_1 \cup \bigcup_{n \in D} J_n$, put $R_{\xi, 0} = \omega$. Applying Lemma \ref{lem_ext_hom} to $\alpha = 0$ and $R_{\xi} = R_{\xi, 0}$, we obtain $S_{\xi, 0} \in [R_{\xi, 0}]^{\omega}$ and a group homomorphism $\phi_{0}: (\mathbb{Q} / \mathbb{Z})^{(P_0 \times \omega)} \oplus \mathbb{Q}^{(P_1)} \to \mathbb{T}$ satisfying (i)-(iv).
	
	Fix $\beta < \mathfrak{c}$ and suppose that $S_{\xi, \gamma} \in [\omega]^{\omega}$ is defined for every $\gamma < \beta$ so that $S_{\xi, \delta} \subset^{*} S_{\xi, \epsilon}$ for all $\epsilon < \delta < \beta$ and $\xi \in J_1 \cup \bigcup_{n \in D} J_n$. Suppose also that we have constructed a group homomorphism $\phi_{\gamma}: (\mathbb{Q} / \mathbb{Z})^{(P_0 \times \omega)} \oplus \mathbb{Q}^{(P_1)} \to \mathbb{T}$ satisfying (i)-(iv). We will show that it is possible to choose $S_{\xi, \beta} \in [\omega]^{\omega}$ so that $S_{\xi, \beta} \subset^{*} S_{\xi, \gamma}$ for all $\gamma < \beta$ and $\xi \in J_1 \cup \bigcup_{n \in D} J_n$ and that it is also possible to construct a group homomorphism $\phi_{\beta} : (\mathbb{Q} / \mathbb{Z})^{(P_0 \times \omega)} \oplus \mathbb{Q}^{(P_1)} \to \mathbb{T}$ satisfying (i)-(iv).
	
	If $\beta$ is a successor ordinal --- say, $\beta = \gamma + 1$ --- put $R_{\xi, \beta} = S_{\xi, \gamma}$ for every $\xi \in J_1 \cup \bigcup_{n \in D} J_n$ and apply Lemma \ref{lem_ext_hom} to $\alpha = \beta$ and $R_{\xi} = R_{\xi, \beta}$. If $\beta$ is a limit ordinal then consider, for each $\xi \in J_1 \cup \bigcup_{n \in D} J_n$, the family $\{S_{\xi, \gamma} : \gamma < \beta\}$. By inductive hypothesis, this family has the SFIP and, since we are assuming $\mathfrak{p} = \mathfrak{c}$, it has a pseudointersection $R_{\xi, \beta}$. Then, apply Lemma \ref{lem_ext_hom} to $\alpha = \beta$ and $R_{\xi} = R_{\xi, \beta}$.
\end{proof}

\end{document}